\newtheorem{thm}{Theorem}[section]
\newtheorem{dfn}[thm]{Definition}
\newtheorem{remark}[thm]{Remark}
\author{Fabio Silva Botelho \\ Department of Mathematics \\  Federal University of Santa Catarina, UFSC \\
Florian\'{o}polis, SC - Brazil}
\title{\bf  A general variational formulation for relativistic mechanics based on fundamentals of differential geometry}
\date{}
\begin{document}
\maketitle

\abstract{ The first part of this article develops a variational formulation for relativistic mechanics. The results are established through standard tools of variational analysis and
differential geometry. The novelty here is that the main motion manifold has  a $n+1$ dimensional range.
It is worth emphasizing  in a first approximation we have
neglected the self-interaction energy part.
In its second part, this article develops some formalism concerning the causal structure in a general space-time manifold.
Finally, the last article section presents a result concerning the existence of a generalized solution for the world sheet  manifold variational formulation.}

\section{Introduction} Let $\Omega \subset \mathbb{R}^3$ be an open, bounded, connected set with a smooth boundary (at least $C^1$ class) denoted by
$\partial \Omega$ and let $[0,T]$ be a time interval. Consider a relativistic motion given by a position field $$(\mathbf{r} \circ \hat{\mathbf{u}}):\Omega \times [0,T]
\rightarrow \mathbb{R}^{n+1}.$$

Here, for an open, bounded and connected  set $D$ with a smooth boundary, we consider a world sheet smooth ($C^3$ class) manifold $\mathbf{r}:D \subset \mathbb{R}^{m+1} \rightarrow \mathbb{R}^{n+1}$,  where point-wise
$$\mathbf{r}(\hat{\mathbf{u}})=(ct,X_1(\mathbf{u}), \ldots, X_{n}(\mathbf{u}))$$
and where
$$\mathbf{r}(\hat{\mathbf{u}}(\mathbf{x},t))=(u_0(\mathbf{x},t),X_1(\mathbf{u}(\mathbf{x},t)), \ldots, X_{n}(\mathbf{u}(\mathbf{x},t))),$$

$$\hat{\mathbf{u}}(\mathbf{x},t)=(u_0(\mathbf{x},t),u_1(\mathbf{x},t), \ldots, u_m(\mathbf{x},t)),$$
 $1 \leq m <n$
and
$$u_0(\mathbf{x},t)=ct.$$

Consider also a density scalar field given by
$$m |\phi(\mathbf{u})|^2:\Omega \times [0,T] \rightarrow \mathbb{R}^+,$$
where $m$ is the total system mass and $$\phi: D \subset \mathbb{R}^{m+1} \rightarrow \mathbb{C}$$  is a wave function.

At this point we highlight that
$$\frac{d \mathbf{r}(\mathbf{u}(\mathbf{x},t))}{d t} \cdot \frac{d \mathbf{r}(\mathbf{u}(\mathbf{x},t))}{d t}=-c^2+\sum_{j=1}^n \left(\frac{ d X_j(\mathbf{u}(\mathbf{x},t))}{dt}\right)^2=-c^2+v^2,$$
where $c$ denotes the speed of light at vacuum and
$$v=\sqrt{\sum_{j=1}^n \left(\frac{d X_j(\mathbf{u}(\mathbf{x},t))}{dt}\right)^2}.$$

We also emphasize that generically, for $a=(\hat{x}_0,x_1,x_2,x_3) \in \mathbb{R}^4$ and $b=(\hat{y}_0,y_1,y_2,y_3)\in \mathbb{R}^4$ we have
$$a\cdot b=-\hat{x}_0\hat{y}_0+\sum_{j=1}^3x_iy_i.$$

Moreover $x_0=t$, $\mathbf{x}=(x_1,x_2,x_3)$ and $$d\mathbf{x}=dx_1dx_2dx_3.$$

Finally, we generically refer to $$(\mathbf{r} \circ \hat{\mathbf{u}}):\Omega \times [0,T] \rightarrow \mathbb{R}^{n+1}$$ as a space-time manifold. Furthermore, with such a notation in mind we denote
\begin{eqnarray}ds^2&=&d\mathbf{r}(\mathbf{u}(\mathbf{x},t)) \cdot d\mathbf{r}(\mathbf{u}(\mathbf{x},t)) \nonumber \\ &=&-c^2dt^2+([dX_1(\mathbf{u}(\mathbf{x},t))]^2+[dX_2(\mathbf{u}(\mathbf{x},t))]^2+\ldots+[dX_n(\mathbf{u}(\mathbf{x},t))]^2).\end{eqnarray}
\begin{remark} About the references, the mathematical background necessary may be found in \cite{19,12}. For the part on relativistic physics, we follow at some extent, the references \cite{550,50}.
\end{remark}
\section{ The system energy}

Consider first the mass differential, given by,
$$dm=\frac{m|\phi(\mathbf{u}(\mathbf{x},t))|^2}{\sqrt{1-\frac{v^2}{c^2}}}\sqrt{-g} \sqrt{U}\;d\mathbf{x},$$
so that the kinetics energy differential is defined by
\begin{eqnarray}dE_c&=& \frac{d \mathbf{r}}{d t} \cdot \frac{d \mathbf{r}}{d t}\; dm
\nonumber \\ &=& - \frac{c^2-v^2}{\sqrt{1-\frac{v^2}{c^2}}} m|\phi|^2\sqrt{g} \sqrt{U}\;d \mathbf{x}
\nonumber \\ &=& -mc\sqrt{c^2-v^2} |\phi|^2\sqrt{g} \sqrt{U}\;d \mathbf{x}
\nonumber \\ &=& -mc \sqrt{ -\frac{d \mathbf{r}}{d t} \cdot \frac{d \mathbf{r}}{d t}} |\phi|^2\sqrt{-g} \sqrt{U}\;d\mathbf{x}
\nonumber \\ &=&-mc \sqrt{ -\frac{\partial \mathbf{r}}{\partial u_j} \frac{\partial u_j}{\partial t} \cdot \frac{\partial \mathbf{r}}{\partial u_k} \frac{\partial u_k}{\partial t}} |\phi|^2\sqrt{-g} \sqrt{U}\;d\mathbf{x}
\nonumber \\ &=& -mc |\phi|^2 \sqrt{-g_{jk} \frac{\partial u_j}{\partial t} \frac{\partial u_k}{\partial t}}\sqrt{-g} \sqrt{U}\;d\mathbf{x}.
\end{eqnarray}
Where $$\mathbf{g}_j=\frac{\partial \mathbf{r}(\mathbf{u})}{\partial u_j},\; \forall j \in \{0,\ldots,m\},$$
$$g_{jk}=\mathbf{g}_j \cdot \mathbf{g}_k,\; \forall j,k \in \{0,\ldots,m\},$$ $$\{g^{jk}\}=\{g_{jk}\}^{-1},$$ $$g=\det\{g_{ij}\}$$ and $$U_{ij}=\frac{\partial \mathbf{u}(\mathbf{x},t)}{\partial x_i} \cdot
\frac{\partial \mathbf{u}(\mathbf{x},t)}{\partial x_j},\; \forall i,j \in \{0,1,2,3\}.$$ Moreover, we define $$U=|\det\{U_{ij}\}|.$$

At this point, we assume there exists a  smooth normal field $\mathbf{n}$ such that $$\text{Span}\left\{ \left\{\frac{\partial \mathbf{r}(\mathbf{u})}{\partial u_j},\;\forall  j \in \{0,\ldots,m\}\right\},\mathbf{n}(\mathbf{u})\right\}\subset \mathbb{R}^{n+1},\; \forall \mathbf{{u}} \in D$$ and $$\frac{\partial^2 \mathbf{r}(\mathbf{u})}{\partial u_j\partial u_k}=\Gamma_{jk}^l(\mathbf{u}) \frac{\partial \mathbf{r}(\mathbf{u})}{\partial u_l}+b_{jk}(\mathbf{u}) \mathbf{n}(\mathbf{u}),\; \forall \mathbf{u} \in D,$$
where $\{\Gamma_{jk}^l\}$ are the Christoffel symbols and the concerning normal field $\mathbf{n}(\mathbf{u})$ is also such that
$$\mathbf{n}(\mathbf{u}) \cdot \mathbf{n}(\mathbf{u})=1,\; \forall \mathbf{u} \in D,$$ $$\frac{\partial \mathbf{r}(\mathbf{u})}{\partial u_l} \cdot \mathbf{n}(\mathbf{u})=0, \text{ in } D, \forall l \in \{0,\ldots,m\}$$ and $$b_{jk}(\mathbf{u})=\frac{\partial^2 \mathbf{r}(\mathbf{u})}{\partial u_j \partial u_k}\cdot \mathbf{n}(\mathbf{u}),\; \forall \mathbf{u} \in D, \;\forall j,k \in \{0,\ldots,m\}.$$

Suppose also the concerning world sheet position field is such that there exist smooth normal fields
$$\hat{\mathbf{n}_1},\ldots, \hat{\mathbf{n}}_s$$ where $m+1+s \geq n+1$ such that
$$\text{Span}\left\{ \left\{\frac{\partial \mathbf{r}(\mathbf{u})}{\partial u_j},\;\forall  j \in \{0,\ldots,m\}\right\},\hat{\mathbf{n}}_1(\mathbf{u}),\ldots,\hat{\mathbf{n}}_s(\mathbf{u})\right\}= \mathbb{R}^{n+1},\; \forall \mathbf{{u}} \in D$$
so that $$\mathbf{n}(\mathbf{u})=f_q(\mathbf{u})\hat{\mathbf{n}}_q(\mathbf{u}),\; \forall \mathbf{u} \in D$$ for an appropriate field $\{f_q\}_{q=1}^s.$

Moreover, we assume
$$\hat{\mathbf{n}}_j(\mathbf{u}) \cdot \hat{\mathbf{n}}_k(\mathbf{u})=\delta_{jk},\; \forall \mathbf{u} \in D,\; j,k \in \{1, \ldots,s\}$$
and  $$\frac{\partial \mathbf{r}(\mathbf{u})}{\partial u_j} \cdot \hat{\mathbf{n}}_k(\mathbf{u})=0,$$
$\forall \mathbf{u} \in D, \; \forall j \in \{0,\ldots,m\},\; k \in \{1,\ldots,s\}.$

Here we recall that
$$\mathbf{n}(\mathbf{u}) \cdot \frac{\partial \mathbf{r}(\mathbf{u})}{\partial u_k}=0, \text{ in } D.$$

Hence,
$$ \frac{\partial \mathbf{n}(\mathbf{u})}{\partial u_j}\cdot  \frac{\partial \mathbf{r}(\mathbf{u})}{\partial u_k}+
\mathbf{n}(\mathbf{u}) \cdot \frac{\partial^2 \mathbf{r}(\mathbf{u})}{\partial u_j \partial u_k}=0,$$
that is,
\begin{equation}\label{tc36} \frac{\partial \mathbf{n}(\mathbf{u})}{\partial u_j}\cdot  \frac{\partial \mathbf{r}(\mathbf{u})}{\partial u_k}=-b_{jk}.\end{equation}
We may also denote
$$\frac{\partial \mathbf{n}(\mathbf{u})}{\partial u_j}=c_j^s \frac{\partial \mathbf{r}(\mathbf{u})}{\partial u_s}+e_j^q \hat{\mathbf{n}}_q,$$
for an appropriate $\{c_j^s\}$ and where $$e_j^q=\frac{\partial \mathbf{n}(\mathbf{u})}{\partial u_j}\cdot \hat{\mathbf{n}}_q.$$

From this and (\ref{tc36}), we obtain

$$c_j^s \frac{\partial \mathbf{r}(\mathbf{u})}{\partial u_s}\cdot \mathbf{g}_k=c_j^s g_{sk}=-b_{jk},$$
so that
$$c_j^s g_{sk} g^{kl}=-b_{jk}g^{kl}=-b_j^l,$$
that is,
$$c_j^l=c_j^s \delta_s^l=- b_j^l,$$
where
$$b_j^l=b_{jk}g^{kl}.$$

Summarizing, we have got
$$\frac{\partial \mathbf{n}(\mathbf{u})}{\partial u_j}=-b_j^l \frac{\partial \mathbf{r}(\mathbf{u})}{\partial u_l}+e_j^q \hat{\mathbf{n}}_q.$$
Observe now that
\begin{eqnarray}\frac{\partial^3 \mathbf{r}(\mathbf{u})}{\partial u_i \partial u_j\partial u_k}&=&
\frac{\partial}{\partial u_i} \left(\Gamma_{jk}^l \frac{\partial \mathbf{r}(\mathbf{u})}{\partial u_l}+b_{jk} \mathbf{n} \right)
\nonumber \\ &=& \left(\frac{\partial \Gamma_{jk}^l}{\partial u_i}+\Gamma_{jk}^p\Gamma_{pi}^l \right)\frac{\partial \mathbf{r}(\mathbf{u})}{\partial u_l}
\nonumber \\ &&+ \Gamma_{jk}^p b_{p i} \mathbf{n}+\frac{\partial b_{jk}}{\partial u_i} \mathbf{n}- b_{jk}b_i^l \frac{\partial \mathbf{r}(\mathbf{u})}{\partial u_l} \nonumber \\ &&+ b_{jk}e^l_i \hat{\mathbf{n}}_l.\end{eqnarray}

Similarly
\begin{eqnarray}\frac{\partial^3 \mathbf{r}(\mathbf{u})}{\partial u_j \partial u_i\partial u_k}&=&
\frac{\partial}{\partial u_j} \left(\Gamma_{ik}^l \frac{\partial \mathbf{r}(\mathbf{u})}{\partial u_l}+b_{ik} \mathbf{n} \right)
\nonumber \\ &=& \left(\frac{\partial \Gamma_{ik}^l}{\partial u_j}+\Gamma_{ik}^p\Gamma_{pj}^l\right) \frac{\partial \mathbf{r}(\mathbf{u})}{\partial u_l}
\nonumber \\ &&+ \Gamma_{jk}^p b_{p i} \mathbf{n}+\frac{\partial b_{ik}}{\partial u_j} \mathbf{n}- b_{ik}b_j^l \frac{\partial \mathbf{r}(\mathbf{u})}{\partial u_l}\nonumber \\ &&+ b_{ik}e^l_j \hat{\mathbf{n}}_l.\end{eqnarray}

Thus, for such a smooth ($C^3$ class) manifold, from
$$\frac{\partial^3 \mathbf{r}(\mathbf{u})}{\partial u_i \partial u_j\partial u_k}=\frac{\partial^3 \mathbf{r}(\mathbf{u})}{\partial u_j \partial u_i\partial u_k},$$
assuming a concerning linear independence and equating the terms in $$\frac{\partial \mathbf{r}(\mathbf{u})}{\partial u_l},$$ we get
\begin{eqnarray}
W_{ijk}^l &=& b_{jk}b_i^l \nonumber \\ &=& \frac{\partial \Gamma_{jk}^l}{\partial u_i}-\frac{\partial \Gamma_{ik}^l}{\partial u_j}
\nonumber \\ &&+\Gamma_{jk}^p\Gamma_{pi}^l-\Gamma_{ik}^p\Gamma_{pj}^l+b_{ik}b_j^l.
\end{eqnarray}

Defining the Riemann curvature tensor by
\begin{eqnarray}R_{ijk}^l=\frac{\partial \Gamma_{jk}^l}{\partial u_i}-\frac{\partial \Gamma_{ik}^l}{\partial u_j}
+\Gamma_{jk}^p\Gamma_{pi}^l-\Gamma_{ik}^p\Gamma_{pj}^l,\end{eqnarray}
we also define the energy part $J_1(\phi,\mathbf{r},\mathbf{u},\mathbf{n})$ as
\begin{eqnarray}
J_1(\phi,\mathbf{r},\mathbf{u},\mathbf{n})&=& \frac{1}{2}\int_0^T\int_\Omega |\phi|^2g^{jk}b_{jl}b_{k}^l \sqrt{-g} \sqrt{U}\;d\mathbf{x}\;dt
\nonumber \\ &=& \frac{1}{2} \int_0^T\int_\Omega |\phi|^2g^{jk} R_{jlk}^l \sqrt{-g} \sqrt{U}\;d\mathbf{x}\;dt \nonumber \\ &&+
\frac{1}{2}\int_0^T\int_\Omega |\phi|^2g^{jk} b_{jk}b_l^l\sqrt{-g} \sqrt{U}\;d\mathbf{x}\;dt.
\end{eqnarray}

The next energy part is defined through the tensor $S_{ijk}^l$ which, considering the Levi-Civita connection $\nabla$ and the standard Lie Bracket $[\cdot,\cdot]$ (see \cite{19,550} for more details),  is such that
$$\nabla_{\left[\phi \frac{\partial \mathbf{r}(\mathbf{u})}{\partial u_i}, \frac{\partial \mathbf{r}(\mathbf{u})}{\partial u_j}\right]} \left( \phi^*
\frac{\partial \mathbf{r}(\mathbf{u})}{\partial u_k} \right)=S_{ijk}^l \frac{\partial \mathbf{r}(\mathbf{u})}{\partial u_l}+\hat{b}_{ijk} \mathbf{n}.$$

Observe that
\begin{eqnarray}&&
\nabla_{\left(\frac{\partial \phi}{\partial u_j} \frac{\partial \mathbf{r}(\mathbf{u})}{\partial u_i}\right) }\left(\phi^* \frac{\partial \mathbf{r}(\mathbf{u})}{\partial u_k}\right)
\nonumber \\ &=& \frac{\partial \phi}{\partial u_j}\frac{\partial \phi^*}{\partial u_i} \frac{\partial \mathbf{r}(\mathbf{u})}{\partial u_k}
\nonumber \\ &&+\frac{\partial \phi}{\partial u_j} \phi^* \frac{\partial^2 \mathbf{r}(\mathbf{u})}{\partial u_i \partial u_k}
\nonumber \\ &=& \frac{\partial \phi}{\partial u_j} \frac{\partial \phi^*}{\partial u_i} \frac{\partial \mathbf{r}(\mathbf{u})}{\partial u_l} \delta_{lk}
\nonumber \\ &&+ \frac{\partial \phi}{\partial u_j} \phi^* \left( \Gamma_{ik}^l \frac{\partial \mathbf{r}(\mathbf{u})}{\partial u_l}+ b_{ik} \mathbf{n}\right)
\end{eqnarray}

Thus,
$$S_{ijk}^l=\frac{\partial \phi}{\partial u_j}\frac{\partial \phi^*}{\partial u_i} \delta_{kl}+\frac{\partial \phi}{\partial u_i}\phi^* \Gamma_{jk}^l.$$

With such results in mind, we define this energy part as
$$J_2(\phi,\mathbf{r},\mathbf{u}, \mathbf{n})=\frac{1}{2}\int_0^T\int_\Omega g^{jk} Re[S_{jlk}^l] \sqrt{-g} \sqrt{U}\;d\mathbf{x}\;dt,$$
where generically $Re[z]$ and $z^*$ denote the real part and complex conjugation, respectively, of $z \in \mathbb{C}.$

\section{The final energy expression}

The expression for the energy, already including the  Lagrange multiplier concerning the mass restriction, is given by
\begin{eqnarray}J(\phi,\mathbf{r},\mathbf{u},\mathbf{n},E)&=&-\int_0^T\int_\Omega dE_c\;dt+J_1(\phi,\mathbf{r},\mathbf{u},\mathbf{n})+J_2(\phi,\mathbf{r},\mathbf{u},\mathbf{n})
\nonumber \\ && -\int_0^T E(t)\left( \int_\Omega |\phi|^2\sqrt{-g} \sqrt{U}\;d\mathbf{x}-1\right)\;dt,\end{eqnarray} so that
\begin{eqnarray}
J(\phi,\mathbf{r},\mathbf{n},\mathbf{u},E)&=& \int_0^T\int_\Omega mc |\phi|^2 \sqrt{-g_{jk} \frac{\partial u_j}{\partial t} \frac{\partial u_k}{\partial t}}
\sqrt{-g} \sqrt{U}\;d\mathbf{x}\;dt \nonumber \\ &&
+\frac{1}{2} \int_0^T\int_\Omega |\phi|^2g^{jk} b_{jl}b_k^l\sqrt{-g} \sqrt{U}\;d\mathbf{x}\;dt \nonumber \\ &&
+ \frac{1}{2}\int_0^T\int_\Omega g^{jk} \frac{\partial \phi}{\partial u_j} \frac{\partial \phi^*}{\partial u_k} \sqrt{-g}\sqrt{U}\;d\mathbf{x}\;dt
\nonumber \\ && +\frac{1}{4} \int_0^T \int_\Omega \left(\frac{\partial \phi}{\partial u_l}\phi^*+\frac{\partial \phi^*}{\partial u_l}\phi \right)\Gamma_{jk}^l g^{jk} \sqrt{-g}\sqrt{U}\;d\mathbf{x}\;dt
\nonumber \\ &&
-\int_0^T E(t)\left( \int_\Omega |\phi|^2\sqrt{-g} \sqrt{U}\;d\mathbf{x}-1\right)\;dt\end{eqnarray}
We shall look for critical points subject to

$$\mathbf{n}(\mathbf{u}(\mathbf{x},t)) \cdot \mathbf{n}(\mathbf{u}(\mathbf{x},t))=1, \text{ in } \Omega \times [0,T]$$
and
$$\frac{\partial \mathbf{r}(\mathbf{u}(\mathbf{x},t))}{\partial u_j} \cdot \mathbf{n}(\mathbf{u}(\mathbf{x},t))=0, \text{ in } \Omega \times [0,T],\; \forall j \in \{0,\ldots,m\}.$$

Already including the concerning Lagrange multipliers, the final functional expression would be
\begin{eqnarray}
J(\phi,\mathbf{r},\mathbf{u},\mathbf{n},E,\lambda)&=& \int_0^T\int_\Omega mc |\phi|^2 \sqrt{-g_{jk} \frac{\partial u_j}{\partial t} \frac{\partial u_k}{\partial t}}
\sqrt{-g} \sqrt{U}\;d\mathbf{x}\;dt \nonumber \\ &&
+\frac{1}{2} \int_0^T\int_\Omega |\phi|^2g^{jk} b_{jl}b_k^l\sqrt{-g} \sqrt{U}\;d\mathbf{x}\;dt \nonumber \\ &&
+ \frac{1}{2}\int_0^T\int_\Omega g^{jk} \frac{\partial \phi}{\partial u_j} \frac{\partial \phi^*}{\partial u_k} \sqrt{-g}\sqrt{U}\;d\mathbf{x}\;dt
\nonumber \\ && +\frac{1}{4} \int_0^T \int_\Omega \left(\frac{\partial \phi}{\partial u_l}\phi^*+\frac{\partial \phi^*}{\partial u_l}\phi\right) \Gamma_{jk}^l g^{jk} \sqrt{-g}\sqrt{U}\;d\mathbf{x}\;dt
\nonumber \\ &&
-\int_0^T E(t)\left( \int_\Omega |\phi|^2\sqrt{-g} \sqrt{U}\;d\mathbf{x}-1\right)\;dt
\nonumber \\ && +\sum_{j=0}^m\int_0^T\int_\Omega \lambda_j(\mathbf{x},t) \frac{\partial \mathbf{r}(\mathbf{u}(\mathbf{x},t))}{\partial u_j} \cdot \mathbf{n}(\mathbf{u}(\mathbf{x},t)) \sqrt{-g}\sqrt{U}\;d\mathbf{x}\;dt \nonumber \\ &&+\int_0^T\int_\Omega \lambda_{m+1}(\mathbf{x},t)
(\mathbf{n}(\mathbf{u}(\mathbf{x},t)) \cdot \mathbf{n}(\mathbf{u}(\mathbf{x},t))-1) \sqrt{-g}\sqrt{U}\;d\mathbf{x}\;dt
\end{eqnarray}
\begin{remark} We must consider such a functional defined on a space of sufficiently smooth functions with appropriate boundary and initial conditions
prescribed.

 Finally, the main difference concerning standard differential geometry in $\mathbb{R}^3$ is that, since $$1 \leq m < n,$$ we have to obtain
through the variation of $J$, the optimal normal field $\mathbf{n}$. Summarizing, at first we do not have an explicit expression for such a field.
\end{remark}

\section{Causal structure}
In this section we develop some formalism concerning the causal structure in a space-time manifold defined by a function $$(\mathbf{r} \circ \hat{\mathbf{u}}):\Omega \times (-\infty,+\infty)
\rightarrow \mathbb{R}^{n+1},$$
where $ t \in (-\infty,+\infty)$ denotes time.

We follow at some extent, the content in the Wald's book \cite{598}, where more details may be found.

\begin{dfn} Let $M$ be a space-time manifold time oriented, in the sense that the light cone related to the tangent
spaces varies smoothly along $M$. A  $C^1$ class curve $\lambda:[a,b] \rightarrow M$ is said to be time-like future directed if for each
$p \in \lambda$ the respective tangent vector is time-like future directed, that is, $$\frac{d\lambda(s)}{ds} \cdot   \frac{d \lambda(s)}{ds}<0,\; \forall s \in [a,b],\text{ (time-like condition)}$$ and $$\frac{d t(s)}{d s}>0, \forall s \in [a,b], \text{(future directed condition)}.$$

Here $$\lambda(s)=\mathbf{r}(\mathbf{\hat{u}}(\mathbf{x}(s),t(s)))$$ for  appropriate smooth functions $$\mathbf{x}(s),t(s).$$

Similarly, we say that such a curve is causal future directed, if the tangent vector is a time-like future directed or is  a null vector, $\forall s \in [a,b].$

Finally, in an analogous fashion we may define a continuous and piece-wise $C^1$ class time-like future directed curve.
\end{dfn}
\begin{remark} At this point we highlight that in the next lines the norm $\|\cdot\|$ refers to the standard Euclidean one in $\mathbb{R}^{n+1}.$
\end{remark}
\begin{dfn} The chronological future of $p \in M$, denoted by $I^+(p)$, is defined as
\begin{eqnarray}I^+(p)&=&\{q \in M\;:\;  \nonumber \\ &&\text{ there exists a continuous and piece-wise  $C^1$ class time-like } 
 \nonumber \\ &&\text{ future directed curve } \lambda:[a,b] \rightarrow M
\nonumber \\ && \text{ such that } \lambda(a)=p \text{ and } \lambda(b)=q\}.\end{eqnarray}
\end{dfn}
Observe that, if $M$ is smooth (as previously indicated, the world sheet manifold in question is at least $C^3$ class) by continuity, if $q \in I^+(p)$ there exists a neighborhood $\mathcal{O}(q)$ such that $$\mathcal{O}(q) \cap M \subset I^+(p).$$

From now and on we always assume any space-time mentioned is always smooth and time-oriented.

Also, for $S \subset M$, we define
$$I^+(S)=\cup_{p \in S} I^+(p),$$ so that
since $I^+(p)$ is open for each $p \in M$, we may infer that $I^+(S)$ is open.
\begin{remark} Similarly, we define the chronological pasts $I^-(p)$ and $I^-(S).$

Moreover the causal future of $p \in M$, denoted by $J^+(p)$ is defined as
\begin{eqnarray}J^+(p)&=&\{q \in M\;:\;  \nonumber \\ &&\text{ there exists a continuous and piece-wise $C^1$ class }
\nonumber \\ && \text{ casual future directed curve } \lambda:[a,b] \rightarrow M
\nonumber \\ && \text{ such that } \lambda(a)=p \text{ and } \lambda(b)=q\}.\end{eqnarray}
Also, we define $$J^+(S)=\cup_{ p \in S} J^+(p),$$ and similarly define the causal pasts $J^-(p)$ and $J^-(S).$
\end{remark}
\begin{dfn} Let $M$ be a space time manifold. We say that $M$ is normal if for each connected set $S \subset M$, there exists $r>0$ such that
 if $p,q \in I^+(S)$ and $0<\|p-q\|<r$, then, interchanging the roles of $p$ and $q$ if necessary, there exists a smooth time-like future directed curve $\lambda : [a,b] \rightarrow I^+(S)$ such that
 $$\lambda(a)=p$$ and $$\lambda(b)=q.$$

 Moreover for each $U \subset M$ open in $M$, $I^+(p)|_U$ consists of all point reach by time like future directed geodesics starting in $p$ and contained in $U$, so that $I^+(p)|_U$ denotes the chronological future of the space-time $U \subset M.$
\end{dfn}

\begin{dfn} A set $S \subset M$ is said to be achronal  if does not exist $p,q \in S$ such that $q \in I^+(p),$ that is if $$I^+(S) \cap S= \emptyset.$$
\end{dfn}

\begin{thm} Let $M$ be a space-time manifold. Let $S \subset M$. Under such assumptions $\partial I^+(S)$ is achronal.
\end{thm}
\begin{proof}
Let $q \in \partial I^+(S).$ Assume $p \in I^+(q).$ Thus $q \in I^-(p)$ and since $I^-(p)$ is open in $M$ there exists $U$ open in $M$,
 such that $U \subset I^-(p)$ and also such that $q \in U$.

 Note that since $q \in \partial I^+(S)$ we have that $$U \cap I^+(S) \neq \emptyset.$$  Let $q_1 \in U \cap I^+(S)$.

 From this, there exists  $p_1 \in S$ and a continuous and piece-wise $C^1$ class time-like future directed curve $\lambda:[a,b] \rightarrow M$ such that
 $\lambda(a)=p_1$ and $\lambda(b)= q_1 \in U \subset I^-(p)$.

 From such a result we may obtain a continuous and piece-wise $C^1$ class time-like future directed curve $\lambda_1:[b,c] \rightarrow M$ such that
 $\lambda_1(b)=q_1$ and $\lambda_1(c)= p$ so that $\lambda_2:[a,c] \rightarrow M$ such that
  \begin{equation}\label{br100} \lambda_2(s)=\left\{
\begin{array}{ll}
\lambda(s),& \text{ if } s \in [a,b] \\
 \lambda_1(s),& \text{ if }  s \in [b,c]
 \end{array} \right. \end{equation}
is a continuous and piece-wise $C^1$ time-like future directed curve such that $\lambda_2(a)=p_1 \in S$ and $\lambda_2(c)=p.$

 Therefore, we may infer that $p \in I^+(S), \; \forall p \in I^+(q)$, so that $$I^+(q) \subset I^+(S).$$

 Suppose, to obtain contradiction, that $\partial I^+(S)$ is not achronal.

 Thus, there exist $q,r \in \partial I^+(S)$ such that $$r \in I^+(q) \subset I^+(S).$$

 From this, we may infer that $$r \in \partial I^+(S) \cap I^+(S),$$ which contradicts $I^+(S)$ to be open.

 Therefore, $\partial I^+(S)$ is achronal.
\end{proof}
\begin{dfn} Let $M$ be a space-time manifold and let $\lambda \subset M$ be a causal future directed curve. We say that a point $p \in M$ is a final point of $\lambda$ if for each open set $U$ such that $p \in U$, there exists $s_0 \in \mathbb{R}$ such that if $s>s_0$, then  $$\lambda(s) \in U.$$

Moreover, we say that a curve is inextensible if does not have any final point.

Past inextensibility is defined similarly.
\end{dfn}
\begin{thm} Let $M$ be a closed space-time manifold. Let $$\lambda_n:(-\infty,b] \rightarrow M$$ be a sequence of differentiable past inextensible curves such that
for each $m \in \mathbb{N}$ there exists $K_m,\;\hat{K}_m \in \mathbb{R}^+$ such that
$$\|\lambda_n(s)\| \leq K_m,\; \forall s \in [-m,b],$$
 and
$$\|\lambda'_n(s)\| \leq \hat{K}_m,\; \forall s \in [-m,b].$$

Assume there exists $p \in M$ that for each open $U$ such that $p \in U$, there exists $n_0 \in \mathbb{N}$ such that if $n>n_0$ then there exists $s_n \in [-\infty,b),$  such that
 $$\lambda_n(s) \subset U,\; \forall s \in (s_n,b].$$

Under such hypotheses, there exist a subsequence $\{\lambda_{n_k}\}$ of $\{\lambda_n\}$ and a continuous curve $\lambda:(-\infty,b] \rightarrow M$
such that $$\lambda_{n_k} \rightarrow \lambda, \text{ uniformly } \text{ in } [-m,b],\; \forall m \in \mathbb{N}$$

and $$\lambda(b)=p.$$
\end{thm}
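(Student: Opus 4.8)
The plan is to deduce this as an application of the Arzelà--Ascoli theorem, since the hypotheses are precisely tailored to its use: on each compact parameter interval $[-m,b]$ we are handed uniform boundedness, $\|\lambda_n(s)\| \leq K_m$, and a uniform Lipschitz (hence equicontinuity) bound coming from $\|\lambda_n'(s)\| \leq \hat{K}_m$. The mean value inequality gives $\|\lambda_n(s_1)-\lambda_n(s_2)\| \leq \hat{K}_m |s_1-s_2|$ for all $s_1,s_2 \in [-m,b]$, which is equicontinuity uniform in $n$. Since $M$ is closed (hence the images lie in a closed, and by the bound $K_m$ bounded, subset of $\mathbb{R}^{n+1}$), the family $\{\lambda_n\}$ restricted to $[-m,b]$ is precompact in $C([-m,b];M)$ with the uniform norm.

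First I would fix $m=1$ and extract, by Arzelà--Ascoli, a subsequence converging uniformly on $[-1,b]$ to some continuous $\lambda^{(1)}$. Then I would restrict this subsequence to $[-2,b]$ and extract a further subsequence converging uniformly there to $\lambda^{(2)}$, which must agree with $\lambda^{(1)}$ on $[-1,b]$ by uniqueness of uniform limits. Iterating over all $m \in \mathbb{N}$ produces a nested chain of subsequences; the standard diagonal argument then yields a single subsequence $\{\lambda_{n_k}\}$ that converges uniformly on every $[-m,b]$ to a well-defined continuous curve $\lambda:(-\infty,b]\to M$. Because $M$ is closed, the pointwise limits remain in $M$, so $\lambda$ genuinely maps into $M$.

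It then remains to verify $\lambda(b)=p$. Here I would invoke the final hypothesis: for every open $U \ni p$ there is $n_0$ so that for $n>n_0$ the tail $\lambda_n(s)$ lies in $U$ for $s$ near $b$; in particular each $\lambda_n(b) \in U$ for large $n$, so $\lambda_{n_k}(b) \to p$. Since $\lambda_{n_k}(b) \to \lambda(b)$ by uniform convergence on $[-m,b]$, uniqueness of limits forces $\lambda(b)=p$. One should confirm that the convergence $\lambda_{n_k}(b)\to p$ follows cleanly by taking a shrinking neighborhood basis $\{U_i\}$ of $p$ and using that $p$ has such a basis in the Euclidean topology on $M \subset \mathbb{R}^{n+1}$.

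I expect the main obstacle to be bookkeeping rather than depth: one must be careful that the diagonal subsequence is genuinely a single subsequence of the original $\{\lambda_n\}$ and that the limit curve is consistently defined across the overlapping intervals $[-m,b]$, so that $\lambda$ is unambiguous on all of $(-\infty,b]$. A secondary subtlety is the interplay between the topology of $M$ as a manifold and the ambient Euclidean norm $\|\cdot\|$ used in the bounds; the preceding remark fixing $\|\cdot\|$ as the standard norm in $\mathbb{R}^{n+1}$, together with closedness of $M$, is what makes this identification legitimate and lets Arzelà--Ascoli apply directly to the $\mathbb{R}^{n+1}$-valued maps.
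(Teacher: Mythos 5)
Your proof is correct, and it reaches the conclusion by a more modular route than the paper. The paper does not cite Arzel\`{a}--Ascoli; it re-derives it in this setting: it enumerates the rationals $\{\alpha_j\}$ with $\alpha_j \leq b$, runs the diagonal extraction over that countable dense set, then uses the Lipschitz bound $\hat{K}_m$ to show $\{\Lambda_k(s)\}$ is Cauchy for every $s \in [-m,b]$, and finally proves uniform convergence on $[-m,b]$ by a supremum-plus-contradiction argument (setting $c_k=\sup_{s \in [-m,b]}\|\Lambda_k(s)-\lambda(s)\|$ and showing every subsequence of $\{c_k\}$ has a further subsequence tending to $0$). Your version compresses all of that into a citation of Arzel\`{a}--Ascoli on each $[-m,b]$ (uniform boundedness from $K_m$, equicontinuity from $\hat{K}_m$ via the mean value inequality) followed by a diagonal argument over the exhaustion $\{[-m,b]\}_{m \in \mathbb{N}}$ rather than over the rationals; the two diagonalizations play the same role, and closedness of $M$ keeps the limit in $M$ in both cases. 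The one substantive difference favors you: the paper's proof never verifies $\lambda(b)=p$ at all --- the hypothesis involving $p$ is never invoked there --- whereas you close this gap correctly by noting that $b \in (s_n,b]$ forces $\lambda_n(b) \in U$ for all large $n$, so that $\lambda_{n_k}(b) \rightarrow p$, and uniform convergence identifies the limit as $\lambda(b)$. So your write-up is not only a valid alternative but is actually more complete than the paper's on this point.
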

\begin{proof} Let $$\{\alpha_n\}=\{ q \in \mathbb{Q}\;:\; q \leq b\}.$$

Observe that,  from the hypotheses $\{\lambda_n(\alpha_1)\} \subset M$ is a bounded sequence, so that there exists a subsequence $$\{\lambda_{n_k}(\alpha_1)\}$$
and a vector which we shall denote by $\lambda(\alpha_1)$ such that
$$\lambda_{n_k}(\alpha_1) \rightarrow \lambda(\alpha_1), \text{ as } k \rightarrow \infty.$$

We shall also denote $$\lambda_{n_k}(\alpha_1)=L^1_k(\alpha_1).$$

Similarly $\{L_1^k(\alpha_2)\}$ is bounded so that there exists a subsequence $\{L^1_{n_k}\}$ of $\{L^1_k\}$ and a vector in $M$, which we will denote by
$\lambda(\alpha_2)$ such that $$L_{n_k}^1(\alpha_2)\rightarrow \lambda(\alpha_2), \text{ as } k \rightarrow \infty.$$

Denoting $L_{n_k}^1=L_k^2$ we have obtained
$$L_{k}^2(\alpha_1) \rightarrow \lambda(\alpha_1),$$
and
$$L_{k}^2(\alpha_2) \rightarrow \lambda(\alpha_2), \text{ as } k \rightarrow \infty.$$

Proceeding in this fashion, we may inductively obtain a subsequences $\{L_{k}^j\}_{k \in \mathbb{N}}$ of $\lambda_n$ such that
$$L_k^j(\alpha_l) \rightarrow \lambda(\alpha_l) \text{ as } k \rightarrow \infty,\; \forall l \in \{1,\ldots, j\}.$$

Let $\varepsilon>0$, $l \in \mathbb{N}$  and $j \geq l.$ Hence there exists $K_j \in \mathbb{N}$ such that if $k \geq K_j$ then
$$\|L_k^j(\alpha_l)-\lambda(\alpha_l)\| < \varepsilon.$$

In particular $$\|L_{K_j}^j(\alpha_l)-\lambda(\alpha_l)\|< \varepsilon, \forall j> l.$$

Hence, denoting $$\Lambda_j=L_{K_j}^j,$$ we have obtained that $\{\Lambda_j\}$ is a subsequence of $\{\lambda_n\}$ such that
$$\Lambda_k(\alpha_j) \rightarrow \lambda(\alpha_j),\; \forall j \in \mathbb{N}.$$

Fix $m \in \mathbb{N}$ such that $-m<b$ and let $s \in [-m,b].$ We are going to prove that $$\{\Lambda_k(s)\}$$ is a Cauchy sequence.

Let $\{\alpha_{n_l}\}$ be a subsequence of $\{\alpha_n\}$ such that $$\alpha_{n_l} \rightarrow s, \text{ as } l \rightarrow \infty.$$

Hence, there exists $l_0 \in \mathbb{N}$ such that if $l>l_0$, then
$$|\alpha_{n_l}-s|< \frac{\varepsilon}{3\hat{K}_m}.$$

Choose $l>l_0$. Since $\{\Lambda_k(\alpha_{n_l})\}$ is a Cauchy sequence, there exists $k_0 \in \mathbb{N}$ such that if $k,p> k_0$, then
$$\|\Lambda_k(\alpha_{n_l})-\Lambda_{p}(\alpha_{n_l})\| <\frac{\varepsilon}{3}.$$

Thus, if $k,p > k_0$, we obtain
\begin{eqnarray}
&&\|\Lambda_k(s)-\Lambda_p(s)\|\nonumber \\ &=& \|\Lambda_k(s)-\Lambda_k(\alpha_{n_l})+\Lambda_k(\alpha_{n_l})-\Lambda_p(\alpha_{n_l})
+\Lambda_p(\alpha_{n_l})-\Lambda_p(s)\| \nonumber \\ &\leq&
\|\Lambda_k(s)-\Lambda_k(\alpha_{n_l})\|+\|\Lambda_k(\alpha_{n_l})-\Lambda_p(\alpha_{n_l})\|
+\|\Lambda_p(\alpha_{n_l})-\Lambda_p(s)\| \nonumber \\ &\leq& \hat{K}_m |s-\alpha_{n_l}|+ \frac{\varepsilon}{3}+ \hat{K}_m |s-\alpha_{n_l}|
\nonumber \\ &<& \frac{\varepsilon}{3}+\frac{\varepsilon}{3}+\frac{\varepsilon}{3} \nonumber \\ &=& \varepsilon.
\end{eqnarray}

From this we may infer that $\{\Lambda_k(s)\}$ is a Cauchy sequence so that we may define
$$\lambda(s)=\lim_{k \rightarrow \infty} \Lambda_k(s),\; \forall s \in [-m,b].$$

We claim that this last convergence, up to a subsequence,  is uniform on $[-m,b].$

Indeed, let $$c_k=\sup_{s \in [-m,b]}\{\|\Lambda_k(s)-\lambda(s)\|\}.$$

Let $s_k \in [-m,b]$ be such that
$$ c_k-1/k < \|\Lambda_k(s_k)-\lambda(s_k)\| \leq c_k.$$

Since $[-m,b]$ is compact, there exist a subsequence $\{s_{k_l}\}$ of $\{s_k\}$ and $s \in [-m,b]$ such that
$$s_{k_l} \rightarrow s, \text{ as } l \rightarrow \infty.$$

At this point we shall prove that $$\|\lambda(s_{k_l})-\lambda(s)\| \rightarrow 0.$$

Indeed, there exists $l_0 \in \mathbb{N}$ such that if $l>l_0$, then $$|s_{k_l}-s|< \frac{\varepsilon}{\hat{K}}.$$

 $$\|\Lambda_p(s_{k_l})-\Lambda_p(s)\| \leq \hat{K}|s_{k_l}-s|<  \varepsilon, \forall l>l_0,\;\forall p \in \mathbb{N}.$$

 From this, we get
 $$\|\lambda(s_{k_l})-\lambda(s)\|=\lim_{p \rightarrow \infty} \|\Lambda_p(s_{k_l})-\Lambda_p(s)\| \leq \varepsilon,\; \forall l >l_0.$$

 Observe that from such a result we may, in a similar fashion, infer that $\lambda$ is continuous.

 From these last results, observing that  there exists $l_1 \in \mathbb{N}$ such that if $l>l_1$, then
 $$\|\Lambda_{k_l}(s)-\lambda(s)\|< \varepsilon,$$ we have that
 \begin{eqnarray}
 && \|\Lambda_{k_l}(s_{k_l})-\lambda(s_{k_l})\| \nonumber \\ &=& \|\Lambda_{k_l}(s_{k_l})-\Lambda_{k_l}(s)+\Lambda_{k_l}(s)-\lambda(s)+\lambda(s)-\lambda(s_{k_l})\|
 \nonumber \\ &\leq&\|\Lambda_{k_l}(s_{k_l})-\Lambda_{k_l}(s)\|+\|\Lambda_{k_l}(s)-\lambda(s)\|+\|\lambda(s)-\lambda(s_{k_l})\|
 \nonumber \\ &\leq& \varepsilon+\varepsilon+\varepsilon \nonumber \\ &=& 3 \varepsilon,\; \forall l > \max\{l_0,l_1\}.\end{eqnarray}

From this we may infer that $c_{k_l} \rightarrow 0$ as $l \rightarrow \infty,$ so that the convergence in question of the subsequence $\{\Lambda_{k_l}\}$ of
 $\{\lambda_n\}$ is uniform.
We claim now that $c_k \rightarrow 0$ as $k \rightarrow \infty.$

Suppose, to be contradiction, that the claim is false. Thus, $\{c_k\}$ does not converge to $0$.

Hence, there exists $\varepsilon_0>0$ such that for each $k \in \mathbb{N}$ there exists $k_l>k$ such that \begin{equation}\label{rt700}c_{k_l} \geq \varepsilon_0.\end{equation}

However, exactly as we have done with $\{c_k\}$ in the lines above, we may obtain a subsequence of $\{c_{k_l}\}$ which converges to $0$.
This contradicts (\ref{rt700}).

Therefore $$c_k \rightarrow 0, \text{ as } k \rightarrow \infty.$$

From this we may infer that $$\Lambda_k \rightarrow \lambda, \text{ uniformly in } [-m,b],\; \forall m \in \mathbb{N} \text{ such that } -m<b.$$

The proof is complete.
\end{proof}
\begin{thm} Let $M$ be a space time manifold. Assume that $\lambda:(-\infty,b] \rightarrow M$ is a causal future directed past inextensible curve.

Under such hypotheses, $$\lambda(s) \in \overline{I^+(\lambda)},\; \forall s \in (-\infty,b].$$
\end{thm}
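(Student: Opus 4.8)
The plan is to reduce the statement to a purely local fact about the chronological future of a \emph{single} point, and then to close everything by continuity together with a trivial set inclusion. Fix $s_0 \in (-\infty,b]$ and write $p=\lambda(s_0)$. Since $p$ belongs to the image of $\lambda$, we have $I^+(p) \subset I^+(\lambda)$, and hence $\overline{I^+(p)} \subset \overline{I^+(\lambda)}$. Therefore it suffices to prove the local claim
\[
p \in \overline{I^+(p)}, \quad \forall p \in M,
\]
since then $\lambda(s_0)=p \in \overline{I^+(p)} \subset \overline{I^+(\lambda)}$, and as $s_0$ is arbitrary the theorem follows.

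To establish the local claim I would use the time orientation of $M$. Because the light cone varies smoothly along $M$, at $p$ one may select a future directed time-like vector $w$ in the tangent space and integrate it (for instance along the corresponding geodesic, or simply along any $C^1$ curve with initial velocity $w$) to obtain a $C^1$ time-like future directed curve $\gamma:[0,\varepsilon_0] \to M$ with $\gamma(0)=p$. For every $\varepsilon \in (0,\varepsilon_0]$ the restriction $\gamma|_{[0,\varepsilon]}$ is a time-like future directed curve joining $p$ to $\gamma(\varepsilon)$, so that $\gamma(\varepsilon) \in I^+(p)$. Letting $\varepsilon \to 0^+$ and using $\gamma(\varepsilon) \to \gamma(0)=p$, we conclude that $p$ is a limit of points of $I^+(p)$, that is $p \in \overline{I^+(p)}$. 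Combined with the previous paragraph this already yields $\lambda(s) \in \overline{I^+(\lambda)}$ for every $s \in (-\infty,b]$.

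A variant that makes explicit use of the causal, past inextensible character of $\lambda$ — and that is more robust should $I^+(\lambda(b))$ degenerate at the future endpoint — proceeds as follows. For $s_0 \in (-\infty,b]$ and $\delta>0$ the point $\lambda(s_0-\delta)$ lies in the domain $(-\infty,b]$, and the restriction $\lambda|_{[s_0-\delta,s_0]}$ is a causal future directed curve from $\lambda(s_0-\delta)$ to $\lambda(s_0)$; thus $\lambda(s_0) \in J^+(\lambda(s_0-\delta))$. One then invokes the standard fact that a causal curve may be approximated by time-like ones, that is $J^+(q) \subset \overline{I^+(q)}$, obtained by concatenating the causal segment reaching $\lambda(s_0)$ with a short time-like future directed curve and smoothing the corner, exactly in the spirit of the concatenation of two causal curves used above to prove that $\partial I^+(S)$ is achronal. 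Since $\lambda(s_0-\delta)$ belongs to the image of $\lambda$, this again gives $\lambda(s_0) \in \overline{I^+(\lambda(s_0-\delta))} \subset \overline{I^+(\lambda)}$.

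The main obstacle is the single local Lorentzian input, namely that one can always tilt infinitesimally into the chronological future — equivalently the inclusion $J^+(q) \subset \overline{I^+(q)}$, or its special case $q \in \overline{I^+(q)}$. This is precisely where the smoothly varying, time-oriented light-cone hypothesis on $M$ is used: it guarantees the existence of the future directed time-like vector $w$, and the possibility of smoothing the concatenated curve while keeping it time-like. Once this is granted, the remainder of the argument is merely the set inclusion $I^+(\lambda(s)) \subset I^+(\lambda)$ together with the continuity of $\lambda$, both of which are immediate.
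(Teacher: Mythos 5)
Your proposal is correct, and your primary argument is genuinely simpler than the paper's. The paper's proof is exactly your second ``variant'': it picks $s_1<s$, observes that $\lambda|_{[s_1,s]}$ is a causal future directed curve from $p=\lambda(s_1)$ to $q=\lambda(s)$, and concludes $q \in \overline{I^+(p)} \subset \overline{I^+(\lambda)}$ — that is, it silently invokes the standard inclusion $J^+(p) \subset \overline{I^+(p)}$, which is precisely the corner-smoothing/deformation fact you flag as needing justification (the paper never proves it). Your main route avoids that stronger input altogether: since $p=\lambda(s_0)$ lies on the curve, $I^+(p) \subset I^+(\lambda)$ and hence $\overline{I^+(p)} \subset \overline{I^+(\lambda)}$, so everything reduces to the local claim $p \in \overline{I^+(p)}$, which you obtain by integrating a future directed time-like vector at $p$ (available from the paper's time-orientation hypothesis of smoothly varying light cones) and letting the parameter tend to $0^+$. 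That is a strictly weaker Lorentzian input than $J^+ \subset \overline{I^+}$, and it exposes the fact that neither the causality nor the past inextensibility of $\lambda$ is actually used: the conclusion holds for an arbitrary curve in a time-oriented space-time. What the paper's route (and your variant) buys is only consistency with the causal-structure machinery it imports from Wald; as a self-contained argument, yours rests on less.
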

\begin{proof} Let $s \in (-\infty,b]$ and choose $s_1<s.$

Thus, $\lambda|_{[s_1,s]}$ is a causal future directed curve such that denoting
$p=\lambda(s_1)$ and $q=\lambda(s)$, we have that
$$q \in \overline{I^+(p)} \subset \overline{I^+(\lambda)}, \forall s \in (-\infty,b].$$

The proof is complete.
\end{proof}

\begin{thm}\label{5.103} Let $M$ be a normal space time manifold.    Assume $\lambda :(-\infty,c] \rightarrow M$ is a causal future directed past inextensible curve which passes through a point $p \in M.$

Under such hypotheses, for each $q \in I^+(p)$ there exists a continuous and  piece-wise $C^1$ class time-like future directed past inextensible curve $\gamma : (-\infty,b] \rightarrow M$,  such that $$\gamma \subset I^+(\lambda)$$ and $$\gamma(b)=q.$$
\end{thm}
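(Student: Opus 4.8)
The plan is to construct $\gamma$ as a \emph{shadow} of $\lambda$: a time-like curve lying strictly inside the open set $I^+(\lambda)$ that follows $\lambda$ back into the infinite past while terminating at the prescribed future point $q$. The engine of the construction is the normality hypothesis, which is designed precisely to join two sufficiently close points of $I^+(S)$ by a time-like future directed curve that never leaves $I^+(S)$. First I would apply the definition of normality to the connected set $S=\lambda$ (the image of $\lambda$ is connected, being the continuous image of an interval) to obtain the associated radius $r>0$. Since $\lambda$ passes through $p$, write $p=\lambda(s_p)$ and consider the past portion $\lambda|_{(-\infty,s_p]}$. I would then choose a strictly decreasing sequence of parameters $s_p=t_0>t_1>t_2>\cdots$ with $t_i\to-\infty$ whose images are clustered, $\|\lambda(t_i)-\lambda(t_{i+1})\|<r/2$ for all $i$, and moreover whose oscillation $\sup_{s\in[t_{i+1},t_i]}\|\lambda(s)-\lambda(t_i)\|$ tends to $0$ as $i\to\infty$. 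This is possible by subdividing each compact subinterval of a fixed exhausting sequence $\tau_k\to-\infty$ finely enough, using uniform continuity of $\lambda$ there.

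Next I would select the shadow nodes. By the preceding theorem every $\lambda(t_i)$ lies in $\overline{I^+(\lambda)}$, and $I^+(\lambda)$ is open; hence for each $i\geq 1$ I can pick $r_i\in I^+(\lambda)$ with $\|r_i-\lambda(t_i)\|$ as small as I wish, in particular small enough that $\|r_i-r_{i+1}\|<r$ and that the time coordinate of $r_i$ exceeds that of $r_{i+1}$, the strict ordering being inherited from the future directed character of $\lambda$, along which the time coordinate is strictly increasing in the parameter. For the future end I would use a time-like future directed curve $\mu$ from $p$ to $q$ (which exists because $q\in I^+(p)$), subdivide it into steps of size less than $r/2$, observe that every node of $\mu$ after the initial one already lies in $I^+(p)\subset I^+(\lambda)$, and choose one extra point $\rho_0\in I^+(\lambda)$ close to $p$ whose time coordinate is just below that of the first node of $\mu$. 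This produces a finite future chain ending at $q$ together with the infinite past chain $\{r_i\}$, all nodes lying in the open set $I^+(\lambda)$ and strictly ordered by their time coordinates.

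I would then join each pair of consecutive nodes by normality. Since consecutive nodes lie in $I^+(\lambda)=I^+(S)$ and are within distance $r$, normality yields a smooth time-like future directed curve between them contained in $I^+(\lambda)$; and because any such curve has strictly increasing time coordinate while the two nodes are strictly ordered in time, the curve is automatically oriented from the more-past node toward the more-future node, which is exactly the direction required. Concatenating all these segments, from the infinite past chain up through $\rho_0$ and the finite future chain to $q$, yields a continuous, piece-wise $C^1$, time-like future directed curve whose image lies in $I^+(\lambda)$. Assigning unit parameter length to each of the infinitely many past segments reparametrizes $\gamma$ over $(-\infty,b]$ with $\gamma(b)=q$, as demanded.

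The remaining, and principal, obstacle is to guarantee that $\gamma$ is \emph{past inextensible}. I would argue by contradiction: if $\gamma$ had a past final point $p^\ast$, then $\gamma(s)$ would eventually lie in every neighborhood of $p^\ast$, so the shadow nodes would satisfy $r_i\to p^\ast$, and since the perturbations $\|r_i-\lambda(t_i)\|$ were chosen to tend to $0$, also $\lambda(t_i)\to p^\ast$. Because the oscillation of $\lambda$ on each subinterval $[t_{i+1},t_i]$ was arranged to vanish as $i\to\infty$, this upgrades to $\lambda(s)\to p^\ast$ along the entire past tail, making $p^\ast$ a past final point of $\lambda$ and contradicting the past inextensibility of $\lambda$. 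The delicate part is precisely this transfer of limits: the normality-provided segments are guaranteed only to have $r$-close endpoints and to stay in $I^+(\lambda)$, so the argument cannot rely on those segments tracking $\lambda$; instead one must fix \emph{in advance}, using the continuity of $\lambda$, that both the subinterval oscillations and the perturbations decay to $0$ with $i$, which is exactly the freedom that lets convergence of the nodes of $\gamma$ force convergence of $\lambda$. With past inextensibility secured, $\gamma$ has all the required properties and the construction is complete.
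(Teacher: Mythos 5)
Your proposal is correct and follows essentially the same route as the paper's own proof: shadow the curve (past portion of $\lambda$ plus a time-like curve from $p$ to $q$) by nodes chosen in the open set $I^+(\lambda)$ at mutual distance below the normality radius $r$, join consecutive nodes by the smooth time-like future directed segments that normality provides, and concatenate into a piece-wise $C^1$ curve ending at $q$. The only differences are ones of added care: you make the orientation of each normality segment explicit (via strict time-coordinate ordering of the nodes, which resolves the ``interchanging the roles'' clause in the definition of normality), and you actually prove past inextensibility by pre-imposing decay of the subinterval oscillations and of the perturbations $\|r_i-\lambda(t_i)\|$ --- two points the paper asserts without argument (its condition $0<d(p_n,\lambda)<C/(1+\sqrt{n})$ gestures at the latter but is never used in an explicit contradiction argument).
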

\begin{proof}
 Let $\hat{\lambda}: [a,b] \rightarrow I^+(\lambda)$ be a time-like future directed curve such that $$\hat{\lambda}(a)=p$$ and
$$\hat{\lambda}(b)=q.$$

We claim that $\hat{\lambda} \subset I^+(\lambda).$

Indeed, let $s \in (a,b].$ Denoting $q_1 =\hat{\lambda}(s)$ we have
$$\hat{\lambda}(s)=q_1 \in I^+(p) \subset I^+(\lambda),\; \forall s \in (a,b].$$
So the concerning claim holds.

Let $\lambda_1:(-\infty,b] \rightarrow M$ be the curve defined by
\begin{equation}\label{br100} \lambda_1(s)=\left\{
\begin{array}{ll}
\lambda(s),& \text{ if } s \in (-\infty,a] \\
 \hat{\lambda}(s),& \text{ if } a \leq s \leq b
 \end{array} \right. \end{equation}

Since the graph of $\lambda$ is connected and $M$ is normal, there exists $r>0$ such that if $\tilde{p},\tilde{q} \in I^+(\lambda)$ and $$0<\|\tilde{p}-\tilde{q}\| <r,$$ then renaming $\tilde{p},\tilde{q}$ if necessary, there exists a time-like future directed curve $\tilde{\lambda}:[c,d] \rightarrow I^+(\lambda)$ such that
$$\tilde{\lambda}(c)=\tilde{p}$$ and
$$\tilde{\lambda}(d)=\tilde{q}.$$

Let $\{s_n\} \subset (-\infty,b]$ be a real sequence such that
$s_1=b$, $s_{n} > s_{n+1}, \; \forall n \in \mathbb{N},$  $$\lim_{n \rightarrow \infty} s_n = -\infty.$$

and also such that $$\|\lambda_1(s_{n+1})-\lambda_1(s_n)\|<\frac{r}{3},\; \forall n \in \mathbb{N}.$$

Define $p_1=q$. Since $$\lambda_1 \subset \overline{I^+(\lambda)}$$ and $I^+(\lambda)$ is open,  for each $n > 1$ we may select $p_n \in  I^+(\lambda)$ such that $$0<\|p_n-\lambda_1(s_n)\|< \frac{r}{3}.$$

Observe that in such a case,
\begin{eqnarray}\label{p908}
\|p_{n+1}-p_n\|&=& \|p_{n+1}-\lambda_1(s_{n+1})+\lambda_1(s_{n+1})-\lambda_1(s_n)+\lambda_1(s_n)-p_n\| \nonumber \\ &\leq&
\|p_{n+1}-\lambda_1(s_{n+1})\|+\|\lambda_1(s_{n+1})-\lambda_1(s_n)\|+\|\lambda_1(s_n)-p_n\| \nonumber \\ &<&
\frac{r}{3}+\frac{r}{3}+\frac{r}{3} \nonumber \\ &=& r, \; \forall n \in \mathbb{N}.\end{eqnarray}

Moreover, $\{p_n\}$ may be chosen such that
$$0< d(p_n,\lambda)< \frac{C}{1+\sqrt{n}},\; \forall n \in \mathbb{N},$$ for some appropriate constant $C>0.$

Thus from (\ref{p908}) and from the fact that $M$ is normal, concerning such  $r>0$, we may obtain a  smooth time-like future directed curve $$\tilde{\lambda}_n:[s_{n+1}, s_{n}] \rightarrow  I^+(\lambda)$$ such that $$\tilde{\lambda}_n(s_{n+1})=p_{n+1},$$ and $$\tilde{\lambda}_n(s_n)=p_n.$$

Therefore, we may define $\gamma:(-\infty,b] \rightarrow M$ such that $$\gamma=\{\tilde{\lambda}_n:[s_{n+1},s_n] \rightarrow  I^+(\lambda)\;:\; n \in \mathbb{N}\},$$
which is a continuous and  piece-wise $C^1$ class time-like future directed  past inextensible curve such that $$\gamma(b)=q,$$ and $$\gamma \subset I^+(\lambda).$$
The  proof is complete.
\end{proof}
\begin{dfn} Let $M$ be a space time manifold. We say that $M$ is strongly causal if for each $p \in M$ and each neighborhood
$U$ of $p$, there exists a neighborhood $V$ of $p$ such that $V \subset U$ and no causal curve intersects $V$ more than one time.
\end{dfn}

\begin{thm} Let $M$ be a space-time manifold strongly causal. Let $K \subset M$ be a compact set. Under such hypotheses, each causal curve $\lambda$
contained in $K$ must have past and future final points.
\end{thm}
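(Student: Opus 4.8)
The plan is to combine the compactness of $K$ with the strong causality hypothesis: compactness will produce a candidate final point as an accumulation point of the curve, and strong causality will rule out the only obstruction to that accumulation point being a genuine final point, namely that the curve might repeatedly leave and re-enter a small neighborhood of it. I would establish the future final point first; the past final point then follows by the symmetric argument applied to the time-reversed curve, using that $K$ is compact at both ends.

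First I would regard $\lambda$ as a future directed causal curve, say $\lambda:[\alpha,\beta)\rightarrow K$ with $\beta\in(\alpha,+\infty]$ the future end of its parameter interval. Choosing any sequence $t_n\nearrow\beta$, the points $\lambda(t_n)$ all lie in the compact set $K$, so some subsequence converges: $\lambda(t_{n_k})\rightarrow p\in K$. The point $p$ is the candidate future final point, and the whole task reduces to verifying that $\lambda$ is eventually contained in every neighborhood of $p$.

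Next I would argue that $p$ is indeed a future final point by contradiction. If it were not, then by the definition of final point there would exist an open set $U$ with $p\in U$ such that $\lambda$ fails to be eventually contained in $U$; that is, for every parameter value there is a larger one at which $\lambda$ lies outside $U$. Applying strong causality at $p$ to this $U$, I obtain a neighborhood $V$ of $p$ with $V\subset U$ such that no causal curve meets $V$ more than once. Since $\lambda(t_{n_k})\rightarrow p\in V$ and $V$ is open, for all large $k$ we have $\lambda(t_{n_k})\in V$. I would then produce at least two disjoint passages of $\lambda$ through $V$: pick a large index with $\lambda(t_{n_{k_1}})\in V$; because $p$ is assumed not final, there is a later parameter $s$ with $\lambda(s)\notin U$, hence $\lambda(s)\notin V$; because the subsequence still converges to $p$, there is a yet later index with $\lambda(t_{n_{k_2}})\in V$. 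Thus the parameter set $\{s:\lambda(s)\in V\}$ is disconnected, being separated by the excursion outside $U\supseteq V$, so $\lambda$ meets $V$ more than once, contradicting the choice of $V$. This forces $p$ to be a future final point, and reversing the time orientation yields the past final point identically.

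The step I expect to be the main obstacle is the rigorous formulation of ``meets $V$ more than once'' and the verification that the two passages produced are genuinely distinct rather than a single connected sojourn in $V$. The key point is that between the two parameters at which $\lambda$ lies in $V$ the curve is forced outside the larger set $U\supseteq V$, so $\{s:\lambda(s)\in V\}$ cannot be a single interval; this is precisely where both the inclusion $V\subset U$ and the non-finality of $p$ are used, and it is the only place the strong causality condition enters the argument.
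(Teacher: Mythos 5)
Your proposal is correct and follows essentially the same argument as the paper: extract an accumulation point $p\in K$ of the curve via compactness, assume $p$ is not a future final point to obtain an open $U\ni p$ that the curve repeatedly exits, and derive a contradiction with strong causality because the convergent subsequence forces the curve to enter the causally-constrained neighborhood $V\subset U$ at least twice, separated by an excursion outside $U$. In fact your handling of the key step is somewhat more careful than the paper's, since you work directly with the specific $V$ furnished by strong causality and explicitly verify that the two passages through $V$ are genuinely distinct, whereas the paper argues more loosely that every $V\subset U$ containing $p$ is met more than once.
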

\begin{proof} Let $\lambda:[-\infty,+\infty] \rightarrow M$ be a causal curve contained in $K$. Let $\{s_j\} \subset \mathbb{R}$ be such that
$s_j < s_{j+1}$ and $$\lim_{j \rightarrow \infty} s_j= +\infty.$$

Observe that $$\{\lambda(s_j)=p_j\} \subset K$$ and $K$ is compact. Hence, there exists a subsequence $$\{p_{j_k}\}$$ and $p \in K$ such that
$$p_{j_k} \rightarrow p,\; \text{ as } k \rightarrow \infty.$$

Suppose, to obtain contradiction, we may obtain an open set $U$ such that $p \in U$ and such that for each $s_0 \in \mathbb{R}$ there exists $s>s_0$ such that
$\lambda(s) \not \in U.$ Thus we have the same for all $V \subset U$ such that $p \in V.$ Fixing an arbitrary  $V \subset U$ with $p \in V$, we have that $\lambda$ enters and leaves $V$ more than one time, because each time $\lambda$ enters $V$ it does not remain completely in $V$. Since $V \subset U$ has been arbitrary, this contradicts the strong
causality of $M$.

Thus, $p$ is a future final point for $\lambda.$ Similarly we may prove that $\lambda$ has a past final point.

This completes the proof.
\end{proof}
\section{ Dependence domains and hyperbolicity}

\begin{dfn} Let $S$ be a closed and achronal set. We define the domain of future dependence of $S$, denoted by $D^+(S)$, by
\begin{eqnarray}
D^+(S)&=& \{ p \in M\;:\;  \text{ each piece-wise smooth causal  future directed past inextensible curve } \nonumber \\ &&
\text{ which passes through } p \text{ intercepts } S\}. \end{eqnarray}
Observe that
$$S \subset D^+(S) \subset J^+(S),$$ and since $S$ is achronal, we have that
$$D^+(S) \cap I^-(S) = \emptyset.$$

The domain of past dependence of $S$, denoted by $D^-(S)$ is defined similarly.

We also define $$D(S)=D^+(S) \cup D^-(S),$$

Finally, an achronal set $\Sigma$ for which $D(\Sigma)=M$ is said to be a Cauchy surface for $M.$

Observe that, in such a case, $$\partial \Sigma= \emptyset.$$

Finally, a space-time manifold which has a Cauchy surface is said to be globally hyperbolic.
\end{dfn}
\begin{thm}Let $M$ be a normal space-time manifold and let $S \subset M$ be  a set closed in $M$. Under such hypotheses,  Let $p \in \overline{D^+(S)}$
if, and only if, each time-like future directed past inextensible curve which passes through $p$ intercepts $S$.
\end{thm}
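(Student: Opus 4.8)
The plan is to prove the two implications separately, in both cases passing between causal and time-like curves by means of Theorem~\ref{5.103} and the limit-curve (compactness) theorem established above, while exploiting that $S$ is closed (and achronal, as in the definition of $D^+(S)$).

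For the forward direction, I would assume $p\in\overline{D^+(S)}$ and fix a time-like future directed past inextensible curve $\gamma$ with $\gamma(b)=p$ (restricting to the sub-arc ending at $p$ if necessary); the goal is $\gamma\cap S\neq\emptyset$. First I would pick a sequence $p_n\in D^+(S)$ with $p_n\to p$ and a past point $r=\gamma(b-\delta)$, so that $p\in I^+(r)$. Since $I^+(r)$ is open, $p_n\in I^+(r)$ for all large $n$, whence there is a time-like curve joining $r$ to $p_n$. Concatenating $\gamma|_{(-\infty,b-\delta]}$ with such a segment produces a causal past inextensible curve through $p_n\in D^+(S)$, which therefore meets $S$. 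If the intersection lies on the fixed arc $\gamma|_{(-\infty,b-\delta]}$ we are done; otherwise it lies on the connecting segment, and letting $\delta\to0$ together with $n\to\infty$ confines these segments to an arbitrarily small neighborhood of $p$, so the intersection points, lying in the closed set $S$, converge to $p$, giving $p\in S$ and again $\gamma\cap S\neq\emptyset$.

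Conversely, assume every time-like past inextensible curve through $p$ meets $S$. I would reduce $p\in\overline{D^+(S)}$ to the inclusion $I^+(p)\subset D^+(S)$, since $p\in\overline{I^+(p)}$ (a point lies on the boundary of its own chronological future, in the spirit of the theorem asserting $\lambda(s)\in\overline{I^+(\lambda)}$) then yields $p\in\overline{I^+(p)}\subset\overline{D^+(S)}$. Thus it suffices, given $q\in I^+(p)$ and an arbitrary causal past inextensible curve $\mu$ through $q$, to prove $\mu\cap S\neq\emptyset$.

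Here is the core step, and the main obstacle. I would apply the time-reverse of Theorem~\ref{5.103} to $\mu$ and the past point $p\in I^-(q)$ to manufacture a sequence of time-like future directed past inextensible curves $\gamma_k$ through $p$, each contained in $I^-(\mu)$ and chosen, by refining the $r$-ball construction of that theorem, so that $\gamma_k\to\mu$ uniformly on compacta. By hypothesis each $\gamma_k$ meets $S$ at some $x_k\in S$; by the compactness theorem proved above the $x_k$ admit a convergent subsequence $x_k\to x_\infty$, with $x_\infty\in\mu$ by the uniform convergence and $x_\infty\in S$ since $S$ is closed, so $\mu\cap S\neq\emptyset$ and hence $q\in D^+(S)$. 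The hard part is precisely the construction of $\{\gamma_k\}$ in the correct temporal direction: Theorem~\ref{5.103} produces time-like curves on the \emph{future} side of a causal curve and ending at a prescribed \emph{future} point, whereas here the approximating curves must pass through the fixed past point $p\in I^-(q)$ and converge to $\mu$. Securing this needs the time-dual of Theorem~\ref{5.103} together with a careful, normality-based refinement guaranteeing both $\gamma_k\subset I^-(\mu)$ and $\gamma_k\to\mu$; once this is in place, the limit-curve theorem and the closedness of $S$ transfer the intersection from the time-like shadows to $\mu$ itself, giving $I^+(p)\subset D^+(S)$ and closing the proof.
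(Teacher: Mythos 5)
Your forward implication is a reasonable direct argument, and in fact it is more explicit than the paper's own treatment (the paper proves that direction as a two-line contrapositive, asserting without detail that a time-like past inextensible curve through $p$ missing $S$ forces a whole neighborhood of $p$ to be disjoint from $D^+(S)$). But it has a gap as written: openness of $I^+(r)$ only gives you \emph{some} time-like curve from $r=\gamma(b-\delta)$ to $p_n$, with no control whatsoever on where that curve goes; it may wander far from $p$ before returning, so letting $\delta\to 0$ and $n\to\infty$ does not by itself confine the connecting segments near $p$. To justify that confinement you must invoke the local clause in the paper's definition of a normal space-time (the statement about $I^+(r)|_U$ for $U$ open), i.e. connect $r$ to $p_n$ by a time-like curve \emph{contained in} a prescribed small neighborhood $U$ of $p$. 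That is fixable.

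The reverse implication, however, rests on a false reduction. You propose to prove $I^+(p)\subset D^+(S)$ and then conclude $p\in\overline{I^+(p)}\subset\overline{D^+(S)}$. The inclusion $I^+(p)\subset D^+(S)$ does not follow from the hypothesis and is generally false: in two-dimensional Minkowski space take $S=\{(0,x):|x|\leq 1\}$, which is closed and achronal, and $p=(0,0)\in S$. The hypothesis holds trivially (every curve through $p$ meets $S$ at $p$ itself), yet $q=(10,0)\in I^+(p)$ is not in $D^+(S)$, because the past inextensible null ray $s\mapsto(10+s,-s)$, $s\leq 0$, through $q$ crosses $\{t=0\}$ only at $(0,10)\notin S$. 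This same example shows why the construction you flag as the ``main obstacle'' cannot be carried out at all: a time-like future directed past inextensible curve through $p$ that shadows $\mu$ would have to track points of $\mu$ in the far past, but those points (e.g. on that null ray) are space-like separated from $p$, hence lie on no time-like curve through $p$. The paper's proof goes \emph{below} $p$ rather than above: it takes $q\in I^-(p)\cap I^+(S)$, supposes $q\notin D^+(S)$, obtains a causal past inextensible curve $\lambda$ through $q$ missing $S$, and applies Theorem \ref{5.103} in its stated, untime-reversed form with target point $p\in I^+(q)$, producing a time-like past inextensible curve through $p$ contained in $I^+(\lambda)$ which, because $S$ is closed and $\lambda$ misses $S$, can be chosen close enough to $\lambda$ to miss $S$ as well --- contradicting the hypothesis. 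Since points of $I^-(p)\cap I^+(S)$ accumulate at $p$, this yields $p\in\overline{D^+(S)}$. Choosing the auxiliary points in $I^-(p)$ is precisely what makes Theorem \ref{5.103} applicable; your variant asks for its time-reverse in a configuration where the required curves need not exist.
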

\begin{proof} Suppose there exists  a time-like future directed past inextensible curve which does not intercept $S$.

Hence there exists a set $U$ open in $M$ such that $p \in U$ with such a propriety.

Thus $U \cap D^+(S) = \emptyset,$ so that $p \not \in \overline{D^+(S)}.$

Reciprocally, suppose each time-like future direct past inextensible curve which passes through $p$ intercepts $S$.

Thus, either $p \in S \subset D^+(S) \subset \overline{D^+(S)},$ and in such a case the proof would be finished, or $p \in I^+(S) \setminus S.$

In this latter case, let $q \in I^-(p) \cap I^+(S)$.

Suppose, to obtain contradiction, that $q \not \in D^+(S).$

Thus there exists a causal future directed past inextensible curve $\lambda$ which passes through $q$ and does not intercept $S$.

Note that $$\lambda \subset \overline{I^+(\lambda)}\setminus S,$$ so that, in such a case, since $M$ is normal, similarly as in the proof of Theorem \ref{5.103}, we may obtain  a piece-wise smooth time-like future directed past inextensible curve $\gamma$  such that
$$\gamma \subset I^+(\lambda),$$ also such that $\gamma \cap S= \emptyset$ and $\gamma$ passes through $p$,  which contradicts the hypotheses in question.

Hence, if $q \in I^-(p) \cap I^+(p),$ then $q \in D^+(S),$
so that $$I^-(p) \cap I^+(S) \subset D^+(S).$$
Since each neighborhood of $p \in I^+(S)$  intercepts $$I^-(p) \cap I^+(S) \subset D^+(S),$$ we have that
$p \in \overline{D^+(S)}.$

The proof is complete.
\end{proof}
\begin{thm} Let $M$ be a space-time  manifold. Let $S \subset M$ and let $\lambda:[a,b] \rightarrow M$ be a $C^1$ class future directed curve such that
$\lambda(s) \in \partial I^+(S),\; \forall s \in [a,b].$

Under such hypotheses, $\lambda$ is a null geodesics, that is,
$$\frac{d \lambda(s)}{d s} \cdot  \frac{d \lambda(s)}{d s}=0,\; \forall s \in [a,b].$$
\end{thm}
\begin{proof} Suppose, to obtain contradiction, that there exists $s_0 \in (a,b)$ such that $$\frac{d \lambda(s_0)}{d s} \cdot  \frac{d \lambda(s_0)}{d s}<0.$$
By continuity, there exists $\delta>0$ such that $$\frac{d\lambda(s)}{d s} \cdot  \frac{d \lambda(s)}{d s}<0,\; \forall s \in
(s_0-\delta,s_0+\delta).$$
Define $$p_1=\lambda(s_0),$$
and
$$p_2=\lambda\left(s_0+\frac{\delta}{2}\right).$$

Thus, $p_2 \in I^+(p_1),$ and $p_1,p_2 \in \partial I^+(S),$ which contradicts $\partial I^+(S)$ to be achronal.

Hence, $$\frac{d\lambda(s)}{d s} \cdot  \frac{d \lambda(s)}{d s} = 0,\; \forall s \in [a,b].$$

The proof is complete.
\end{proof}

\begin{thm} Let $M$ be a space-time manifold. Let $S \subset M$ and suppose $\lambda:[a,b] \rightarrow \overline{I^+(S)}$ is  a future directed null geodesics.

Under such hypotheses, $$\lambda(s) \in  \partial I^+(S),\; \forall s \in [a,b].$$
\end{thm}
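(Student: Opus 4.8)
The plan is to argue by contradiction, exploiting that $I^+(S)$ is open and that, by the result established at the beginning of this section, $\partial I^+(S)$ is achronal. Since $I^+(S)$ is open we have the disjoint decomposition $\overline{I^+(S)}=I^+(S)\cup \partial I^+(S)$, so proving $\lambda(s)\in \partial I^+(S)$ for every $s$ is equivalent to showing that $\lambda(s)\notin I^+(S)$ for every $s\in[a,b]$. Accordingly, I would assume, to obtain contradiction, that there exists $s_0\in[a,b]$ with $\lambda(s_0)\in I^+(S)$.

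First I would establish a forward-propagation (\emph{future set}) property: if $x\in I^+(S)$ and $y$ lies in the causal future of $x$ through a piece-wise $C^1$ causal curve, then $y\in I^+(S)$. This follows by concatenating a time-like future directed curve from $S$ to $x$ with the causal curve from $x$ to $y$ and rounding the corner at $x$ --- exactly the device used in the proof of the preceding theorem --- so as to produce a time-like future directed curve from $S$ to $y$. Since $\lambda$ is future directed and causal (its tangent is null), this would yield $\lambda(s)\in I^+(S)$ for every $s\ge s_0$.

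Next I would consider the first-entry parameter $t^{*}=\inf\{s\in[a,b]:\ \lambda(s)\in I^+(S)\}$. By continuity of $\lambda$ and openness of $I^+(S)$, if $t^{*}>a$ then $\lambda(t^{*})\notin I^+(S)$, hence $\lambda(t^{*})\in \partial I^+(S)$, while $\lambda(s)\in I^+(S)$ for all $s\in(t^{*},b]$. Thus the restriction $\lambda|_{[t^{*},\,t^{*}+\varepsilon]}$ would be a future directed null geodesic issuing from the boundary point $\lambda(t^{*})\in\partial I^+(S)$ and immediately entering the open set $I^+(S)$. The aim of the final step is to show that such a configuration is impossible.

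The hard part will be precisely this last step: ruling out that a null geodesic crosses from $\partial I^+(S)$ into $I^+(S)$. The natural attempt is to combine the null tangent with the openness of $I^+(S)$ to manufacture a second boundary point time-like related to $\lambda(t^{*})$, contradicting the achronality of $\partial I^+(S)$. I expect this to be the genuine obstacle, because a bare null geodesic can in fact leave the boundary and enter the chronological future (think of a null line passing through an interior point of a future cone): controlling this seems to require either an appeal to the geodesic actually \emph{realizing} the achronal relation between its endpoints, or an additional standing hypothesis --- for instance normality of $M$ as used in the earlier theorems, or achronality of $S$ together with past-inextensibility of $\lambda$ inside $\overline{I^+(S)}$ --- that forbids the crossing. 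Making that control rigorous, rather than the routine corner-rounding and compactness steps above, is where the essential difficulty lies.
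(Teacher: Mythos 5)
Your proposal never arrives at a proof, and by your own admission the final step --- ruling out that a future directed null geodesic can cross from $\partial I^+(S)$ into $I^+(S)$, or simply live inside $I^+(S)$ --- is left open. But your diagnosis of that obstacle is exactly right, and it is worth being blunt about why it cannot be overcome: the statement as written is false. Work in Minkowski space (a special case of the paper's setting) with $S=\{p\}$, $p$ the origin, so that $I^+(S)$ is the open future cone and $\partial I^+(S)$ is the null cone. The curve $\lambda(s)=(1+s,s,0,\ldots,0)$, $s\in[0,1]$, is a future directed null geodesic (tangent $(1,1,0,\ldots,0)$ is null), and it lies entirely in $I^+(S)\subset\overline{I^+(S)}$, since $-(1+s)^2+s^2=-1-2s<0$; it never meets $\partial I^+(S)$. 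More generally, by the very push-up/future-set property you invoke, any future directed null geodesic issuing from a point of $I^+(S)$ remains in $I^+(S)$, and a null geodesic leaving a boundary point in a non-generator null direction enters the open set $I^+(S)$ immediately. So no argument, along your lines or any other, can close the gap without additional hypotheses --- for instance that $\lambda$ is a generator of the boundary, or that its endpoints are not chronologically related.

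You should also know that the paper's own proof does not supply the step you are missing: it consists of the assertion that since $\frac{d\lambda}{ds}\cdot\frac{d\lambda}{ds}=0$ and $\lambda(s)\in\overline{I^+(S)}$, ``we get'' $\lambda(s)\in\partial I^+(S)$. That inference tacitly assumes that no point of the open set $I^+(S)$ can lie on a null geodesic, which the example above refutes; it is an assertion of the conclusion, not a proof. The correct theorem in this vicinity (in Wald's book, which the paper follows) is the one-directional generator statement: every point of $\partial I^+(S)\setminus\overline{S}$ lies on a null geodesic contained in $\partial I^+(S)$, which together with the paper's preceding theorem (a future directed curve lying in the achronal boundary must be null) is as much as is true; the present theorem is its false converse. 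Your refusal to take that converse for granted is the more careful piece of mathematics here; the only improvement I would suggest is to convert your (correct) suspicion into the explicit counterexample, which settles the matter.
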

\begin{proof}
Since $\lambda$ is future directed null geodesics, we have that
$$\frac{d \lambda(s)}{d s} \cdot  \frac{d \lambda(s)}{d s} = 0,\; \forall s \in [a,b].$$

From this, since  $\lambda(s) \in \overline{I^+(S)}$, we get $$\lambda(s) \in \partial I^+(S),\; \forall s \in [a,b].$$

The proof is complete.
\end{proof}
\begin{thm} Let $M$ be a normal space-time manifold. Let $\Sigma \subset M$ be a Cauchy surface and let $\lambda: [-\infty,+\infty] \rightarrow M$ be a causal
inextensible curve.

Under such hypotheses, $\lambda$ intercepts  $\Sigma$, $I^+(\Sigma)$ and $I^-(\Sigma).$
\end{thm}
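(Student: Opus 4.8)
The plan is to exploit the defining property of a Cauchy surface, namely $D(\Sigma)=D^+(\Sigma)\cup D^-(\Sigma)=M$, together with the achronality of $\Sigma$ and the null-geodesic boundary results already established. Since reversing the time orientation interchanges $I^+$ with $I^-$ and $D^+$ with $D^-$ while sending a causal inextensible curve to a causal inextensible curve, it suffices to prove that $\lambda$ meets $\Sigma$ and $I^+(\Sigma)$; the assertion for $I^-(\Sigma)$ then follows by the analogous time-reversed argument.

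First I would show $\lambda$ meets $\Sigma$. Fix any $s_0$ and set $p=\lambda(s_0)$. Since $M=D^+(\Sigma)\cup D^-(\Sigma)$, either $p\in D^+(\Sigma)$ or $p\in D^-(\Sigma)$. In the first case the restriction $\lambda|_{(-\infty,s_0]}$ is a causal future directed past inextensible curve passing through $p$, so by the very definition of $D^+(\Sigma)$ it must intercept $\Sigma$; in the second case the restriction $\lambda|_{[s_0,+\infty)}$, read in the past direction, plays the same role for $D^-(\Sigma)$. Either way there is $\bar s$ with $r:=\lambda(\bar s)\in\Sigma$.

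Next I would prove $\lambda$ enters $I^+(\Sigma)$. The key preliminary fact is that, for the Cauchy surface $\Sigma$, one has the disjoint decomposition $M=I^-(\Sigma)\cup\Sigma\cup I^+(\Sigma)$ with $\partial I^+(\Sigma)=\Sigma$; I would derive this from $D(\Sigma)=M$, the achronality of $\Sigma$ (which gives $\Sigma\cap I^{\pm}(\Sigma)=\emptyset$), and the fact that $\Sigma\subset\overline{I^+(\Sigma)}$ because each point of $\Sigma$ is a limit of points of its own chronological future. Granting this, note first that for $s>\bar s$ the point $\lambda(s)$ cannot lie in $I^-(\Sigma)$: otherwise, composing the causal segment from $r$ to $\lambda(s)$ with a timelike segment from $\lambda(s)$ into $\Sigma$ would produce a timelike curve joining two points of $\Sigma$, violating achronality. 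Hence $\lambda(s)\in\Sigma\cup I^+(\Sigma)$ for all $s>\bar s$, and since $I^+(\Sigma)$ is stable under the causal future (a timelike curve emanating from $\Sigma$ followed by a causal curve is again timelike), once $\lambda$ enters $I^+(\Sigma)$ it never returns to $\Sigma$.

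Finally, suppose for contradiction that $\lambda$ never enters $I^+(\Sigma)$. Then $\lambda|_{[\bar s,+\infty)}\subset\Sigma=\partial I^+(\Sigma)$, and since $\lambda$ is a $C^1$ future directed curve lying in $\partial I^+(\Sigma)$, the earlier null-geodesic theorem forces it to be a future directed null geodesic contained in $\partial I^+(\Sigma)$, so $\lambda$ would be a future inextensible null geodesic imprisoned in the Cauchy surface $\Sigma$. I expect this last step to be the main obstacle: ruling out such an imprisoned inextensible null geodesic is exactly where the Cauchy property (here encoded in $D(\Sigma)=M$ and in the normality of $M$) must be used. The route I foresee is to pick a point $x$ on this geodesic, choose $y\in I^+(x)\subset I^+(\Sigma)$, and, invoking the normality of $M$ and Theorem~\ref{5.103}, build a time-like future directed curve lying in $I^+(\lambda)$ that reaches arbitrarily far, thereby contradicting that $\lambda$ remains in the achronal set $\Sigma$. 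This forces $\lambda$ into $I^+(\Sigma)$, and the symmetric time-reversed argument yields the intersection with $I^-(\Sigma)$, completing the proof.
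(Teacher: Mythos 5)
Your first two steps are sound: meeting $\Sigma$ follows directly from $D^+(\Sigma)\cup D^-(\Sigma)=M$ exactly as you say (the paper does not even spell this out), and the disjoint decomposition $M=I^-(\Sigma)\cup\Sigma\cup I^+(\Sigma)$ is a correct and derivable preliminary. The genuine gap is in the final step, the one you yourself flag as the main obstacle, and as sketched it does not close. Your contradiction hypothesis is that $\lambda$ never meets $I^+(\Sigma)$; this confines only $\lambda|_{[\bar s,+\infty)}$ to $\Sigma$, while the portion of $\lambda$ with $s<\bar s$ may perfectly well lie in $I^-(\Sigma)$. Consequently the curve furnished by Theorem~\ref{5.103} lies in $I^+(\lambda)$, and in your scenario $I^+(\lambda)\not\subset I^+(\Sigma)$: it also contains the chronological futures of points of $\lambda$ lying in $I^-(\Sigma)$, and such futures can meet $\Sigma$ without any violation of achronality. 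So when you extend your time-like curve to an inextensible one, the intersection with $\Sigma$ that the Cauchy property forces may land on exactly such a point, and no contradiction follows. Note also that ``reaches arbitrarily far'' is not by itself in tension with $\lambda\subset\Sigma$; the only usable contradiction has the form ``this inextensible time-like curve can never meet $\Sigma$ (achronality), yet it must (Cauchy property)'', and for that you need the constructed curve trapped on one side of $\Sigma$, which your setup does not deliver. (A smaller issue of the same kind: concatenating a causal segment with a time-like segment gives a causal, not a time-like, curve; upgrading it to time-like needs a push-up argument, again of Theorem~\ref{5.103} type.)

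This is precisely where the paper's choice of contrapositive is more efficient. The paper assumes instead that $\lambda$ misses $I^-(\Sigma)$; then the decomposition traps \emph{all} of $\lambda$ in $\Sigma\cup I^+(\Sigma)$, hence $I^+(\lambda)\subset I^+\bigl(\Sigma\cup I^+(\Sigma)\bigr)=I^+(\Sigma)$, and the Theorem~\ref{5.103} curve $\gamma\subset I^+(\lambda)$, extended to the future, is an inextensible time-like curve that can never touch $\Sigma$ (a touching point would lie in $\Sigma\cap I^+(\Sigma)$), contradicting $D(\Sigma)=M$; the statement for $I^+(\Sigma)$ then follows ``similarly''. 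Your argument can be repaired in the same spirit without abandoning its structure: in your scenario $\mu:=\lambda|_{[\bar s,+\infty)}$ is a future-inextensible causal curve contained entirely in $\Sigma$, so the time-reversed Theorem~\ref{5.103} applied to $\mu$ gives a time-like future-inextensible curve contained in $I^-(\mu)\subset I^-(\Sigma)$; extending it to the past and invoking the Cauchy property together with achronality yields the contradiction. Once this is done, the null-geodesic theorem you invoke becomes unnecessary --- the work is done entirely by confining the constructed time-like curve to one side of $\Sigma$.
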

\begin{proof}
Suppose, to obtain contradiction, that $\lambda$ does not intercept $I^-(\Sigma).$ Similarly as in the proof of  Theorem \ref{5.103}, we may obtain
a time-like past inextensible curve such that $$\gamma \subset I^+(\lambda) \subset I^+(\Sigma \cup I^+(\Sigma)) =I^+(\Sigma).$$

Extending $\gamma$ to the future indefinitely (if possible),  such a curve cannot intercept $\Sigma$, because in such a case $\Sigma$ would not be achronal, which is
contradiction.

However, since each causal inextensible curve must intercept $\Sigma$, we have got a final contradiction (that is, such a $\gamma$ does not exists).

From this we may infer that $\lambda$ intercepts $I^-(\Sigma).$

Similarly, we may show that $\lambda$ intercepts $I^+(\Sigma).$

The proof is complete.

\end{proof}
\section{Existence of solution for the previous general functional}

In this section, under some conditions, we prove the existence of solution for the general functional presented in the previous sections.
Specifically, we will be concerned with the existence of a kind of generalized solution for the main world sheet manifold.

We start with the following remark.
\begin{remark}  Considering the position field given by
$$\mathbf{r}: D=[0,T] \times D_1 \rightarrow \mathbb{R}^{N+1}$$
and fixing a small $\varepsilon >0$, define
\begin{eqnarray}U&=& \{\tilde{\mathbf{u}}=(\mathbf{r}, \phi,\mathbf{n}) \in C^2(\overline{D}; \mathbb{R}^{N+1}) \times C^1(\overline{D}; \mathbb{C})
\times  C^1(\overline{D}; \mathbb{R}^4)
\nonumber \\ &&  \text{ such that } |\phi|^2\geq \varepsilon \text{ in } D,\; \; \forall k \in \{0,\ldots,m\},
 \nonumber \\ &&\;\mathbf{r}(0,\mathbf{u})=\hat{\mathbf{r}}_0, \text{ in } D_1, \mathbf{r}(T,\mathbf{u})=\hat{\mathbf{r}}_1, \text{ in } D_1, \nonumber \\ &&
\mathbf{r}(t,\mathbf{u})=\hat{\mathbf{r}}_2, \text{ on } \partial D_1 \times [0,T]\nonumber \\ &&\phi(0,\mathbf{u})=\hat{\phi}_0, \text{ in } D_1, \phi(c T,\mathbf{u})=\hat{\phi}_1, \text{ in } D_1, \nonumber \\ &&
\phi(ct,\mathbf{u})=\hat{\phi}_2, \text{ on } \partial D_1 \times [0,T]
\},
\end{eqnarray}
\begin{eqnarray}
\tilde{U}&=& \{(\mathbf{r}, \phi,\mathbf{n}) \in W^{2,2}(D; \mathbb{R}^{N+1}) \times W^{1,2}(D; \mathbb{C})\times
 W^{1,2}(D; \mathbb{R}^4)
\nonumber \\ && \text{ such that }\mathbf{r}(0,\mathbf{u})=\hat{\mathbf{r}}_0, \text{ in } D_1, \mathbf{r}(T,\mathbf{u})=\hat{\mathbf{r}}_1, \text{ in } D_1, \nonumber \\ &&
\mathbf{r}(t,\mathbf{u})=\hat{\mathbf{r}}_2, \text{ on } \partial D_1 \times [0,T]
\nonumber \\ &&\phi(0,\mathbf{u})=\hat{\phi}_0, \text{ in } \Omega, \phi(c T,\mathbf{u})=\hat{\phi}_1, \text{ in } D_1, \nonumber \\ &&
\phi(ct,\mathbf{u})=\hat{\phi}_2, \text{ on } \partial D_1 \times [0,T]
\},
\end{eqnarray}

$$U_1=\left\{ \tilde{\mathbf{u}} \in U\;:\; \int_{D_1} |\phi(ct,\mathbf{u})|^2\sqrt{-g}\; d\mathbf{u}=1, \text{ on } [0,T]\right\},$$

$$U_2=\left\{ \tilde{\mathbf{u}} \in U\;:\; \frac{\partial \mathbf{r}(\mathbf{u})}{\partial u_j} \cdot \mathbf{n}(\mathbf{u})=0, \text{ in } D\right\},$$
and
$$U_3=\{\tilde{\mathbf{u}} \in U_1 \times U_2\;:\; \mathbf{n}(\mathbf{u})\cdot \mathbf{n}(\mathbf{u})=1,
\text{ in } D\}.$$

Finally, we define also,
$$A=U \cap U_1 \cap U_2$$
and $$A_1=\tilde{U} \cap U_1\cap U_2.$$
\end{remark}
With such definitions in mind we state and prove the following existence theorem.

\begin{thm} For $5 \leq m \leq 8$ and $m<N$, let $J_K:U \rightarrow \mathbb{R}$ be defined by
\begin{eqnarray}J_K(\tilde{\mathbf{u}})&=&J(\tilde{\mathbf{u}})+\frac{K}{2}\int_0^T\left(\int_{D_1} |\phi|^2 \sqrt{-g} \; d \mathbf{u}-1\right)^2\;dt
\nonumber \\ && +\frac{K}{2} \sum_{j=0}^m \int_D \left(\frac{\partial \mathbf{r}(\mathbf{u})}{\partial u_j} \cdot \mathbf{n}(\mathbf{u})\right)^2\sqrt{-g} \sqrt{U}\;d \mathbf{u}dt  \nonumber \\ &&
+\frac{K}{2}\int_D (\mathbf{n}(\mathbf{u})\cdot \mathbf{n}(\mathbf{u})-1)^2\sqrt{-g} \;d \mathbf{u}dt,
\end{eqnarray}
where \begin{eqnarray}
J(\tilde{\mathbf{u}})&=&
+\frac{1}{2} \int_D |\phi|^2g^{jk} b_{jl}b_k^l\sqrt{-g} \;d\mathbf{u}\;dt \nonumber \\ &&
+ \frac{1}{2}\int_D g^{jk} \frac{\partial \phi}{\partial u_j} \frac{\partial \phi^*}{\partial u_k} \sqrt{-g}\;d\mathbf{u}\;dt
\nonumber \\ && +\frac{1}{4} \int_D \left(\frac{\partial \phi}{\partial u_l}\phi^*+\frac{\partial \phi^*}{\partial u_l}\phi \right)\Gamma_{jk}^l g^{jk} \sqrt{-g}\;d\mathbf{u}\;dt,\end{eqnarray}
and where $K \in \mathbb{N}$ is a large constant.
Let $\{\tilde{\mathbf{v}}_n^K\}$ be a minimizing sequence for $J_K$, such that
$$\alpha \leq J_K(\tilde{\mathbf{v}}_n^K) < \alpha+\frac{1}{n},$$
where $$\alpha=\inf_{\tilde{\mathbf{v}} \in U} J_K(\tilde{\mathbf{u}}).$$

Suppose such a sequence is such that
\begin{enumerate}
\item There exists $c_0>0$ such that
$$(g^{jk})_n^K y_j y_k \geq c_0 y_j y_j,\;\forall y=\{y_j\} \in \mathbb{R}^2,\; \forall n \in \mathbb{N}, \text{ in } D.$$
\item There exists $c_1>0$ such that 
\begin{eqnarray}&&|\phi_n^K|^2(g^{jk})_n^K \mathbf{z}_j \cdot (\mathbf{g}_l)_n^K (g^{ls})_n^K \mathbf{z}_s \cdot (\mathbf{g}_k)_n^K \geq c_1 \mathbf{z}_j \cdot \mathbf{z}_j, \nonumber \\ &&
\forall \{\mathbf{z}_j\} \in \mathbb{R}^{(N+1)(m+1)},\;  \text{ in } D,\forall n \in \mathbb{N},\end{eqnarray}
so that
$$|\phi_n^K|^2(g^{jk})_n^K (b_{jl})_n^K (b_k^l)_n^K \geq c_1 \frac{\partial \mathbf{n}_n^K}{\partial u_i}\cdot  \frac{\partial \mathbf{n}_n^K}{\partial u_i}, \text{ in } D, \;\forall n \in \mathbb{N}.$$

 \item There exists $\{(c_2)_{ij}\}$ such that $(c_2)_{ij}>0,\; \forall i,j \in \{0,\ldots,m\},$ so that
$$|\phi_n^K|^2(g^{jk})_n^K (b_{jl})_n^K (b_k^l)_n^K \geq (c_2)_{ij} \left|\frac{\partial^2 \mathbf{r}_n^K}{\partial u_i \partial u_j}\right|^2, \text{ in } D,\; \forall n \in \mathbb{N}.$$
\item $$\|(\mathbf{g}_k)_n^K\|_{C^{1,\nu}(\overline{D})} \leq \hat{K},\;\forall k \in \{0,\ldots,m\},\; \forall n \in \mathbb{N},$$ for some $\hat{K} \in \mathbb{R}^+$ and some $0<\nu< 1$.
\end{enumerate}
Moreover, assume there exists $K_0 \in \mathbb{N}$, such that if $K>K_0$, then there exists $n_0 \in \mathbb{N}$ such that if $n > n_0$,
then
$$\int_{D_1} |\phi_n^K|^2 (\sqrt{-g})_n^K\;d\mathbf{u} \geq \frac{1}{4}, \text{ on } [0,T]$$
and
$$\mathbf{n}_n^K \cdot \mathbf{n}_n^K \geq \frac{1}{4},\; \text{ in } D.$$

Under such hypotheses, there exists $\tilde{\mathbf{u}}_0^K \in \tilde{U}$ such that
$$J_K(\tilde{\mathbf{u}}_0^K)=\inf_{ \tilde{\mathbf{u}} \in U} J_K(\tilde{\mathbf{u}}).$$

Finally, there exists a subsequence $\{K_j\}$ of $\mathbb{N}$ and  $\tilde{\mathbf{u}}_0 \in A_1$ such that
$$J(\tilde{\mathbf{u}}_0)=\lim_{j \rightarrow \infty} J_{K_j}(\tilde{\mathbf{u}}_0^{K_j})=\inf_{ \tilde{\mathbf{u}} \in A} J(\tilde{\mathbf{u}}).$$
\end{thm}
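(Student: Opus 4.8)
The overall strategy is the direct method of the calculus of variations, carried out in two nested stages. In the first stage I fix the large penalization constant $K$ and produce a minimizer $\tilde{\mathbf{u}}_0^K$ of $J_K$ over the reflexive space $\tilde U$; in the second stage I let $K \to \infty$ along a subsequence $\{K_j\}$ so that the three quadratic penalties collapse onto the constraints defining $A_1$, and identify the resulting limit $\tilde{\mathbf{u}}_0$ as a minimizer of the reduced functional $J$ over $A$.

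For the first stage, the coercivity hypotheses (1)--(4) are precisely what is needed to bound the minimizing sequence $\{\tilde{\mathbf{v}}_n^K\}$. Hypothesis (1) (uniform ellipticity of $g^{jk}$) applied to the term $\frac12\int g^{jk}\partial_j\phi\,\partial_k\phi^*$ bounds $\{\phi_n^K\}$ in $W^{1,2}$; hypothesis (3) applied to $\frac12\int|\phi|^2 g^{jk}b_{jl}b_k^l$ bounds the second derivatives of $\mathbf{r}_n^K$ in $L^2$, hence bounds $\{\mathbf{r}_n^K\}$ in $W^{2,2}$; and hypothesis (2) bounds $\{\mathbf{n}_n^K\}$ in $W^{1,2}$. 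Because the boundary and initial data are prescribed and fixed in the definition of $U$ and $\tilde U$, these estimates control full norms and not merely seminorms. By reflexivity I extract a subsequence with $\mathbf{r}_n^K \rightharpoonup \mathbf{r}_0^K$ in $W^{2,2}$, $\phi_n^K \rightharpoonup \phi_0^K$ in $W^{1,2}$ and $\mathbf{n}_n^K \rightharpoonup \mathbf{n}_0^K$ in $W^{1,2}$, with $\tilde{\mathbf{u}}_0^K=(\mathbf{r}_0^K,\phi_0^K,\mathbf{n}_0^K)\in\tilde U$ inheriting the boundary data by compactness of the trace.

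The delicate point is the limit of the nonlinear geometric coefficients, and here hypothesis (4) is decisive. Since $\dim D=m+1\in\{6,\dots,9\}$, the embedding $W^{2,2}(D)\hookrightarrow C^0(\overline D)$ fails, so weak $W^{2,2}$ convergence of $\mathbf{r}_n^K$ alone cannot make $g_{jk}$, $g^{jk}$, $\Gamma_{jk}^l$ or $b_{jk}$ converge. The uniform bound $\|(\mathbf{g}_k)_n^K\|_{C^{1,\nu}(\overline D)}\le\hat K$ repairs this: by Arzel\`a--Ascoli a further subsequence satisfies $\mathbf{r}_n^K\to\mathbf{r}_0^K$ in $C^2(\overline D)$, so every quantity built from $\mathbf{r}$ and its first two derivatives converges uniformly, and $b_{jk}=\partial^2_{jk}\mathbf{r}\cdot\mathbf{n}$ converges strongly once $\mathbf{n}_n^K\to\mathbf{n}_0^K$ in $L^2$ (Rellich--Kondrachov, valid in every dimension on the bounded set $D$). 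The window $5\le m\le 8$ is chosen so that the Sobolev exponents attached to $W^{1,2}$ place $|\phi_n^K|^2$ and $\mathbf{n}_n^K$ in dual positions compatible with the uniformly bounded geometric factors, which is what makes the products in $J_K$ converge. With the coefficients converging uniformly, all lower-order terms pass to the limit by strong convergence, while the three leading quadratic forms---in $\partial\phi$, in $\partial^2\mathbf{r}$ through $|\phi|^2 g^{jk}b_{jl}b_k^l$, and in $\partial\mathbf{n}$---are nonnegative and convex in the highest derivative and hence weakly lower semicontinuous. This yields $J_K(\tilde{\mathbf{u}}_0^K)\le\liminf_n J_K(\tilde{\mathbf{v}}_n^K)=\alpha$, so $\tilde{\mathbf{u}}_0^K$ minimizes $J_K$ on $\tilde U$.

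For the second stage I evaluate $J_K$ on a constraint-satisfying test element to obtain a bound on $J_K(\tilde{\mathbf{u}}_0^K)$ uniform in $K$; this forces each of the three penalty integrals, and therefore each constraint defect, to vanish as $K\to\infty$. Running the compactness argument of the previous paragraph uniformly in $K$ gives a subsequence $\{K_j\}$ and a limit $\tilde{\mathbf{u}}_0$ for which the vanishing penalties deliver $\int_{D_1}|\phi_0|^2\sqrt{-g}\,d\mathbf{u}=1$, $\partial_j\mathbf{r}_0\cdot\mathbf{n}_0=0$ and $\mathbf{n}_0\cdot\mathbf{n}_0=1$, so that $\tilde{\mathbf{u}}_0\in A_1$; here the a priori lower bounds $\int|\phi_n^K|^2(\sqrt{-g})_n^K\ge\frac14$ and $\mathbf{n}_n^K\cdot\mathbf{n}_n^K\ge\frac14$ are exactly what prevent $\phi_0$ or $\mathbf{n}_0$ from degenerating to zero, so these normalizations survive in the limit rather than collapsing to the trivial value. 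Lower semicontinuity of $J$ gives $J(\tilde{\mathbf{u}}_0)\le\liminf_j J_{K_j}(\tilde{\mathbf{u}}_0^{K_j})$, and a matching upper bound obtained by testing against near-optimal elements of $A$ (on which the penalties vanish) closes the chain $J(\tilde{\mathbf{u}}_0)=\lim_j J_{K_j}(\tilde{\mathbf{u}}_0^{K_j})=\inf_{A}J$. The main obstacle throughout is the passage to the limit in the non-convex product $|\phi|^2 g^{jk}b_{jl}b_k^l$, where two separately weakly convergent factors meet; this is resolved only by combining the uniform $C^2$ convergence of $\mathbf{r}$ from hypothesis (4) with the strong $L^2$ convergence of $\phi$ and $\mathbf{n}$ in the admissible dimension range.
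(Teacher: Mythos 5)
Your first stage stops short of what the theorem actually asserts, and the missing step is precisely the paper's central device. From weak lower semicontinuity you obtain only $J_K(\tilde{\mathbf{u}}_0^K)\le \liminf_n J_K(\tilde{\mathbf{v}}_n^K)=\alpha=\inf_{U}J_K$; but the limit $\tilde{\mathbf{u}}_0^K$ lies in $\tilde U$, not in $U$ (it need not be $C^2$, nor satisfy $|\phi|^2\ge\varepsilon$), so it cannot be tested against the definition of $\alpha$ to obtain the reverse inequality. Your inference ``so $\tilde{\mathbf{u}}_0^K$ minimizes $J_K$ on $\tilde U$'' is a non sequitur: a priori $\inf_{\tilde U}J_K$ could be strictly smaller than $\alpha$, while the theorem claims the exact equality $J_K(\tilde{\mathbf{u}}_0^K)=\inf_U J_K$. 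The paper closes this gap by proving \emph{continuity}, not mere lower semicontinuity, of $J_K$ along a minimizing sequence, and for that it needs strong convergence of the gradients of $\phi$ and $\mathbf{n}$, which your $W^{1,2}$ bounds plus weak convergence cannot supply. The mechanism absent from your proposal is: apply Ekeland's variational principle to replace $\{\tilde{\mathbf{v}}_n^K\}$ by an almost-critical minimizing sequence $\{\tilde{\mathbf{u}}_n^K\}$ with $\|\delta J_K(\tilde{\mathbf{u}}_n^K)\|_U\le 1/\sqrt{n}$; the variation of $J_K$ in $\mathbf{n}$ (and then in $\phi$) yields divergence-form elliptic equations $\partial_{u_j}\bigl((a_{ij}^l)_n^K\,\partial_{u_i}\mathbf{n}_n^K\bigr)=(f_l)_n^K$ whose coefficients are uniformly $C^1$-bounded thanks to hypothesis (4), so elliptic regularity gives uniform $W^{2,2}$ bounds on $\mathbf{n}_n^K$ and $\phi_n^K$. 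Rellich--Kondrachov then upgrades weak convergence to strong convergence in $W^{1,q}$ for $q\le p^*=\frac{2m}{m-4}$, and each term of $J_K$, estimated by H\"older's inequality with four factors measured in $W^{1,4}$ or $L^4$ norms, genuinely converges, giving $J_K(\tilde{\mathbf{u}}_{n_l}^K)\rightarrow J_K(\tilde{\mathbf{u}}_0^K)=\alpha$ exactly. This also explains the restriction $5\le m\le 8$, which you misread: it is not about placing $|\phi|^2$ and $\mathbf{n}$ ``in dual positions,'' but about ensuring that $p^*=\frac{2m}{m-4}$ is defined and at least $4$, so that the strong $W^{1,4}$ convergence used in those four-factor estimates is available.

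The same deficiency propagates to your second stage. The theorem asserts the equality $J(\tilde{\mathbf{u}}_0)=\lim_j J_{K_j}(\tilde{\mathbf{u}}_0^{K_j})$, and your combination of lower semicontinuity with an upper bound by test elements of $A$ yields only $J(\tilde{\mathbf{u}}_0)\le\liminf_j J_{K_j}(\tilde{\mathbf{u}}_0^{K_j})$ together with $\limsup_j J_{K_j}(\tilde{\mathbf{u}}_0^{K_j})\le\inf_A J$; without strong convergence of $\{\tilde{\mathbf{u}}_0^{K_j}\}$ (which in the paper is inherited because the $W^{2,2}$ estimates from the elliptic bootstrap are uniform in $K$), the limit itself need not exist and the chain of equalities does not close. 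Your other readings of the hypotheses are sensible --- hypothesis (4) with Arzel\`a--Ascoli does give $C^2$ compactness of $\mathbf{r}_n^K$, hypotheses (1)--(3) do give the coercivity bounds, Fatou's lemma does give lower semicontinuity of the nonnegative curvature term, and your penalization argument for the $\mathcal{O}(1/K)$ decay of the constraint defects is cleaner than the paper's appeal to the Euler--Lagrange equations --- but without the Ekeland/elliptic-regularity device, or some substitute such as a density argument identifying $\inf_{\tilde U}J_K$ with $\inf_U J_K$, the stated equalities remain unproved.
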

\begin{proof}
From the hypotheses we may infer that there exists $K_1 \in \mathbb{R}^+$ such that
\begin{enumerate}
\item $$ \left\|\frac{\partial^2 \mathbf{r}_n^K}{\partial u_i \partial u_j}\right\|_2 \leq K_1,\; \forall n \in \mathbb{N},
\; \forall i,j \in \{0,\ldots,m\}.$$
\item $$\|\phi_n^K\|_2 \leq K_1, \; \forall n \in \mathbb{N}.$$
\item $$\left\|\frac{\partial \phi_n^K}{\partial u_j}\right\|_2 \leq K_1, \; \forall n \in \mathbb{N},\; \forall j \in \{0,\ldots,m\}.$$
\item $$\left\|\frac{\partial \mathbf{n}_n^K}{\partial u_j}\right\|_2 \leq K_1, \; \forall n \in \mathbb{N},\; \forall j \in \{0,\ldots,m\}.$$
\end{enumerate}

Observe that $J_K$ is lower semi-continuous so that, from the Ekeland variational principle there exists a sequence $\{\tilde{\mathbf{u}}_n^K\} \in U$  such that
$$\alpha \leq J_K(\tilde{\mathbf{u}}_{n}^K)< \alpha+\frac{1}{n},$$
$$\|\tilde{\mathbf{u}}_n^K-\tilde{\mathbf{v}}_n^K\|_U< \frac{1}{\sqrt{n}},$$
and
$$\|\delta J_K(\tilde{\mathbf{u}}_{n}^K)\|_U \leq \frac{1}{\sqrt{n}},\; \forall n  \in \mathbb{N},$$

From such a result and from the variation of $J_K$  in $\mathbf{n}$ we obtain that
$$\frac{\partial }{\partial u_j}\left((a_{ij}^l)_n^K \frac{\partial (\mathbf{n})_n^K}{\partial u_i}\right)=(f_l)^K_n, \text{ in } D,$$
for appropriate positive definite $\{(a_{ij}^l)^K_n\} $ of $C^1$ class and $(f_l)_n^K \in L^2,\; \forall l \in \{1,\ldots,n+1\}, \; i,j \in \{0,...,m\}.$

Thus, from the Theory of Elliptic Partial Differential Equations, we have that $\mathbf{n}_{n}^K \in W^{2,2}$ and, since $\{(a_{ij}^l)_n^K\}$ and $\{(f_l)^K\}$ are uniformly bounded in
$C^1$ and $L^2$, respectively, there exists $K_3 \in \mathbb{R}^+$ such that
$$\|\mathbf{n}_{n}^K\|_{2,2} \leq K_3,\; \forall n \in \mathbb{N}.$$

With such results, we may similarly obtain that
$$\|\phi_{n}^K\|_{2,2} \leq K_4,\; \forall l \in \mathbb{N},$$ for some $K_4 \in \mathbb{R}^+.$

From such results and the Rellich-Kondrachov Theorem, we may obtain a subsequence $\{n_l\}$ of $\mathbb{N}$ and $\tilde{\mathbf{u}}_0^K \in \tilde{U}$ such that
\begin{enumerate}
\item $$\phi_{n_l}^K \rightharpoonup \phi_0^K, \text{ as } l \rightarrow \infty, \text{ weakly in }  W^{2,2}.$$
\item $$\phi_{n_l}^K \rightarrow \phi_0^K, \text{ as } l \rightarrow \infty, \text{ strongly in }  W^{1,q},$$
\item $$\mathbf{r}_{n_l}^K \rightharpoonup \mathbf{r}_0^K, \text{ as } l \rightarrow \infty, \text{ weakly in }  W^{2,2}.$$
\item $$\mathbf{r}_{n_l}^K \rightarrow \mathbf{r}_0^K, \text{ as } l \rightarrow \infty, \text{ strongly in }  W^{1,q}.$$
\item $$\mathbf{n}_{n_l}^K \rightharpoonup \mathbf{n}_0^K, \text{ as } l \rightarrow \infty, \text{ weakly in }  W^{2,2}.$$
\item $$\mathbf{n}_{n_l}^K \rightarrow \mathbf{n}_0^K, \text{ as } l \rightarrow \infty, \text{ strongly in } W^{1,q},$$
\end{enumerate}
$\forall 1 \leq q \leq \frac{2m}{m-4}\equiv p^*.$
At this point, firstly we highlight that,
up to a not relabeled subsequence
$$\left|(\sqrt{-g})_{n_l}^K- (\sqrt{-g})_0^K\right|^4 \rightarrow 0, \text{ as } l \rightarrow \infty, \text{ a.e. in } D,$$
and
$$\left\|\left|(\sqrt{-g})_{n_l}^K- (\sqrt{-g})_0^K\right|^4 \right\|_\infty<\hat{K}_1,\; \forall l \in \mathbb{N},$$ for some appropriate
$\hat{K}_1 \in \mathbb{R}^+$, so that, from the Lebesgue Dominated Convergence Theorem,
 we have
 $$\|(\sqrt{g})_{n_l}^K-(\sqrt{-g}_0)^K\|_{4} \rightarrow 0, \text{ as } l \rightarrow \infty.$$
Thus,
\begin{eqnarray} &&\left|\int_D \frac{\partial \phi_{n_l}^K}{\partial u_j} \frac{\partial (\phi^*)_{n_l}^K }{\partial u_k} (g^{jk})_{n_l}^K (\sqrt{-g})_{n_l}^K\; d \mathbf{u}dt
\right.\nonumber \\ &&\left.
-\int_D \frac{\partial \phi_0^K}{\partial u_j} \frac{\partial (\phi^*)_0^K }{\partial u_k}(g^{jk})_0^K (\sqrt{-g})_0^K\; d \mathbf{u}dt\right|
\nonumber \\ &\leq& \int_D \left|\frac{\partial \phi_{n_l}^K}{\partial u_j} \frac{\partial (\phi^*)_{n_l}^K }{\partial u_k} (g^{jk})_{n_l}^K (\sqrt{-g})_{n_l}^K
\right.\nonumber \\ &&
\left.-\frac{\partial \phi_0^K}{\partial u_j} \frac{\partial (\phi^*)_0^K }{\partial u_k}(g^{jk})_0^K (\sqrt{-g})_0^K\;\right| d \mathbf{u}dt
\nonumber \\
&\leq& \int_D \left|\frac{\partial \phi_{n_l}^K}{\partial u_j} \frac{\partial (\phi^*)_{n_l}^K }{\partial u_k} (g^{jk})_{n_l}^K (\sqrt{-g})_{n_l}^K
\right.\nonumber \\ &&
\left.-\frac{\partial \phi_0^K}{\partial u_j} \frac{\partial (\phi^*)_{n_l}^K }{\partial u_k}(g^{jk})_{n_l}^K (\sqrt{-g})_{n_l}^K\;\right| d \mathbf{u}dt
\nonumber \\ &&+\int_D \left|\frac{\partial \phi_0^K}{\partial u_j} \frac{\partial \phi_{n_l}^K }{\partial u_k} (g^{jk})_{n_l}^K (\sqrt{-g})_{n_l}^K
\right.\nonumber \\ &&
\left.-\frac{\partial \phi_0^K}{\partial u_j} \frac{\partial (\phi^*)_0^K }{\partial u_k}(g^{jk})_{n_l}^K (\sqrt{-g})_{n_l}^K\;\right| d \mathbf{u}dt
\nonumber \\ &&+\int_D \left|\frac{\partial \phi_0^K}{\partial u_j} \frac{\partial (\phi^*)_0^K }{\partial u_k} (g^{jk})_n^K (\sqrt{-g})_{n_l}^K
\right.\nonumber \\ &&
\left.- \frac{\partial \phi_0^K}{\partial u_j} \frac{\partial (\phi^*)_0^K }{\partial u_k}(g^{jk})_0^K (\sqrt{-g})_{n_l}^K\;\right| d \mathbf{u}dt
\nonumber \\ &&+\int_D \left|\frac{\partial \phi_0^K}{\partial u_j} \frac{\partial (\phi^*)_0^K }{\partial u_k} (g^{jk})_0^K (\sqrt{-g})_{n_l}^K
\right.\nonumber \\ &&
\left.-\frac{\partial \phi_0^K}{\partial u_j} \frac{\partial (\phi^*)_0^K }{\partial u_k}(g^{jk})_0^K (\sqrt{-g})_0^K\;\right| d \mathbf{u}dt
\nonumber \\ &\leq&K_1
\|\phi_{n}^K-\phi_0\|_{1,4}\|(\phi^*)_{h_l}^K\|_{1,4}\|(g^{jk})_{n_l}^K\|_{4}\|(\sqrt{-g})_{n_l}^K\|_{4} \nonumber \\ &&
+K_1\|\phi_0^K\|_{1,4} \|(\phi^*)_{n_l}^K-(\phi^*)_0^K\|_{1,4}\|(g^{jk})_{n_l}^K\|_{4}\|(\sqrt{-g})_{n_l}^K\|_{4} \nonumber \\ &&
+K_1\|\phi_0^K\|_{1,4}^2\|(g^{jk})_{n_l}^K-(g^{jk})_0^K\|_{4}\|(\sqrt{-g})_{n_l}^K\|_{4} \nonumber \\ &&
+K_1\|\phi_0^K\|_{1,4}^2\|(g^{jk})_0^K\|_{4}\|(\sqrt{-g})_{n_l}^K-(\sqrt{-g}_0)^K\|_{4} \nonumber \\ &\rightarrow& 0, \text{ as } l \rightarrow \infty.
\end{eqnarray}
Similarly we may prove the continuity of the remaining functional parts, so that
$$J_K(\tilde{\mathbf{u}}_{n_l}^K) \rightarrow J(\tilde{\mathbf{u}}_0^K)=\min_{\tilde{\mathbf{u}} \in U} J_K(\tilde{\mathbf{u}}).$$

At this point we observe that, through the Euler-Lagrange equations, the hypotheses and the limit process, we have obtained
$$\int_{D_1} |\phi_0^K|^2 \sqrt{-g}_0^K\;d \mathbf{u}-1 = \mathcal{O}(1/K),\; \text{ on } [0,T],$$
$$\int_D \left(\frac{\partial \mathbf{r}_0^K}{\partial u_j} \cdot \mathbf{n}_0^K\right)^2\sqrt{-g_0^K}\;d\mathbf{u}dt =\mathcal{O}(1/K)$$
and
$$\int_D \left( \mathbf{n}_0^K\cdot \mathbf{n}_0^K-1\right)^2\sqrt{-g_0^K}\;d\mathbf{u}dt =\mathcal{O}(1/K).$$

Observe also that the previous estimates are valid also for the sequence $\{\tilde{\mathbf{u}}_0^K\}$ (the concerning constants do not depend on $K$) so that there exists
$\tilde{\mathbf{u}}_0 \in \tilde{U}$ such that, up to a not relabeled subsequence,
\begin{enumerate}
\item $$\phi_{0}^K \rightharpoonup \phi_0, \text{ as } K \rightarrow \infty, \text{ weakly in }  W^{2,2}.$$
\item $$\phi_{0}^K \rightarrow \phi_0, \text{ as } K \rightarrow \infty, \text{ strongly in }  W^{1,q}.$$
\item $$\mathbf{r}_{0}^K \rightharpoonup \mathbf{r}_0, \text{ as } K \rightarrow \infty, \text{ weakly in }  W^{2,2}.$$
\item $$\mathbf{r}_{0}^K \rightarrow \mathbf{r}_0, \text{ as } K \rightarrow \infty, \text{ strongly in }  W^{1,q}.$$
\item $$\mathbf{n}_{0}^K \rightharpoonup \mathbf{n}_0, \text{ as } K \rightarrow \infty, \text{ weakly in }  W^{2,2}.$$
\item $$\mathbf{n}_{0}^K \rightarrow \mathbf{n}_0, \text{ as } K \rightarrow \infty, \text{ strongly in }W^{1,q},$$
\end{enumerate}
where as previously indicated, $\forall 1 \leq q \leq p^*.$

Moreover from the previous estimates and concerning limits (obtained similarly as above indicated),
$$\int_{D_1} |\phi_0|^2 \sqrt{-g_0}\;d\mathbf{u}-1 = 0,\; \text{ on } [0,T],$$
$$\int_D \left(\frac{\partial \mathbf{r}_0}{\partial u_j} \cdot \mathbf{n}_0\right)^2\sqrt{-g_0} \;d\mathbf{u}dt =0$$
and
$$\int_D \left( \mathbf{n}_0\cdot \mathbf{n}_0-1\right)^2\sqrt{-g_0}\;d\mathbf{u}dt =0.$$
From this we get, $$\tilde{\mathbf{u}}_0 \in A_1,$$
so that
$$J(\tilde{\mathbf{u}}_0)=\lim_{K \rightarrow \infty} J_K(\tilde{\mathbf{u}}_0^K)=\inf_{\tilde{\mathbf{u}} \in A} J(\tilde{\mathbf{u}}).$$

The proof is complete.
\end{proof}
\section{Conclusion} In this article we have obtained a variational formulation for relativistic mechanics based on standard tools of differential geometry.
The novelty here is that the main manifold has its range in a space of dimension $n+1$. In such a formulation the concept of normal field plays a fundamental role.

In the second article part, we have presented some formalism concerning the causal structure in a general space-time manifold defined by a function
$$(\mathbf{r}\circ \hat{\mathbf{u}}): \Omega \times (-\infty,+\infty) \rightarrow \mathbb{R}^{n+1}.$$

It is worth highlighting the main reference for this second part is the book \cite{598}.

Finally, in the last section, we develop an existence result of a kind of generalized solution for the main manifold variational formulation.

\end{document}